\documentclass[12pt]{article}
\usepackage{amsmath,amssymb,amsthm}
\setlength{\textwidth}{6.3in}
\setlength{\textheight}{8.7in}
\setlength{\topmargin}{0pt}
\setlength{\headsep}{0pt}
\setlength{\headheight}{0pt}
\setlength{\oddsidemargin}{0pt}
\setlength{\evensidemargin}{0pt}

\newtheorem{theorem}{Theorem}[section]
\newtheorem{corollary}[theorem]{Corollary}
\newtheorem{lemma}[theorem]{Lemma}

\numberwithin{equation}{section}
\headsep=17pt
\parskip=.5pt
\parindent=12pt

\def\F{\mathcal{F}}

\def\C{\mathcal{C}}

\def\B{\mathcal{B}}
\def\E{\mathcal{E}}
\def\F{\mathcal{F}}

\begin{document}
\title{Sumsets of the distance set in $\mathbb{F}_q^d$}
\author{
    Thang Pham\\
\small Department of Mathematics\\
\small Ecole Polytechnique Federale de Lausanne\\
\small \texttt{thang.pham@epfl.ch}        
    }
    \date{}
\maketitle
\begin{abstract}
Let $\mathbb{F}_q$ be a finite field of order $q$, where $q$ is large odd prime power.  In this paper, we  improve some recent results on the additive energy of the distance set, and on sumsets of the distance set due to Shparlinski (2016). More precisely, we prove that for $\mathcal{E}\subseteq \mathbb{F}_q^d$, if $d=2$ and  $q^{1+\frac{1}{4k-1}}=o(|\mathcal{E}|)$ then we have 
$|k\Delta_{\mathbb{F}_q}(\mathcal{E})|=(1-o(1))q$; if $d\ge 3$ and $q^{\frac{d}{2}+\frac{1}{2k}}=o(|\mathcal{E}|)$ then we have 
$|k\Delta_{\mathbb{F}_q}(\mathcal{E})|=(1-o(1))q,$
where $k\Delta_{\mathbb{F}_q}(\mathcal{E}):=\Delta_{\mathbb{F}_q}(\mathcal{E})+\cdots+\Delta_{\mathbb{F}_q}(\mathcal{E}) ~(\mbox{$k$ times}).$
\end{abstract}
\section{Introduction}
The Erd\H{o}s distance problem asks for the minimal number of distinct distances determined by a finite point set of $n$ points in the plane $\mathbb{R}^2$. In 1946, Erd\H{os} \cite{Erd} showed that a $\sqrt{n}\times \sqrt{n}$ integer lattice determines  $\Theta(n/\sqrt{\log n})$ distinct distances. From this construction, he conjectured that any set of $n$ points in $\mathbb{R}^2$ determines at least $n^{1-o(1)}$ distinct distances.  This conjecture has recently been solved by Guth and Katz \cite{guth} in 2010. They showed that a set of $n$ points in $\mathbb{R}^2$ has at least $cn/\log n$ distinct distances. For the latest developments on the Erd\H{o}s distance problem in higher dimensions and variants, see \cite{tardos, soly, fox}, and the references contained therein.

Let $\mathbb{F}_q$ be a finite field of order $q$, where $q$ is large odd prime power.  The distance function between two points $\mathbf{x}$ and $\mathbf{y}$ in $\mathbb{F}_q^d$, denoted by $||\mathbf{x}-\mathbf{y}||$, is defined as \[||\mathbf{x}-\mathbf{y}||=(x_1-y_1)^2+\cdots+(x_d-y_d)^2.\]

Although it is not a norm, the function $||\mathbf{x}-\mathbf{y}||$ has properties similar to the Euclidean norm, for example, it is invariant under orthogonal matrices and translations. For $\E\subseteq\mathbb{F}_q^d$, we define the set of distances determined by points in $\E$ as
\[\Delta_{\mathbb{F}_q} (\E)=\{||\mathbf{x}-\mathbf{y}||\colon \mathbf{x}, \mathbf{y}\in \E\}\subseteq \mathbb{F}_q.\]

Bourgain, Katz, and Tao \cite{bourgain-katz-tao} made the first investigation to the prime field analogue of the Erd\H{o}s distinct distance problem. More precisely, they proved that for any set $\E\subseteq \mathbb{F}_p^2$ with $|\E|=p^\alpha$, $0<\alpha<2$, the distance set satisfies $|\Delta_{\mathbb{F}_p}(\E)|\ge |\E|^{\frac{1}{2}+\epsilon}$ for some $\epsilon>0$ depending on $\alpha$. In the case $|\E|\ll p^{15/11}$, Stevens and de Zeeuw \cite{frank} improved this exponent to $|\E|^{8/15}$. This is the current best bound in the literature.\footnote{Here and throughout,  $X \gg Y$ means that there exists $C>0$ such that $X\ge  CY$, $X=o(Y)$ means that $X/Y\to 0$ as $q\to \infty$, where $X, Y$ are viewed as functions in $q$.}

For the case of large sets over arbitrary finite fields, the first explicit exponent for $|\Delta_{\mathbb{F}_q} (\E)|$ was given by Iosevich and Rudnev \cite{io} in 2007 by using Fourier analytic methods.
\begin{theorem}[\textbf{Iosevich-Rudnev}, \cite{io}]\label{NM Adidaphat}
For $\E\subseteq\mathbb{F}_q^d$ with $|\E|\gg q^{\frac{d}{2}}$, we have 
\[|\Delta_{\mathbb{F}_q}(\E)|\ge
 \min\left\lbrace q, \frac{|\E|}{q^{(d-1)/2}}\right\rbrace.\]
\end{theorem}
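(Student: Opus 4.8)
The plan is to run the Fourier-analytic ``square-root cancellation for spheres'' argument. For $t\in\mathbb{F}_q$ put $S_t=\{\mathbf{x}\in\mathbb{F}_q^d:\|\mathbf{x}\|=t\}$ and
\[
\nu(t)=\bigl|\{(\mathbf{x},\mathbf{y})\in\E\times\E:\|\mathbf{x}-\mathbf{y}\|=t\}\bigr| ,
\]
so that $\sum_t\nu(t)=|\E|^2$ and $\nu(t)>0$ exactly when $t\in\Delta_{\mathbb{F}_q}(\E)$. Since $\nu(0)$ will turn out to be negligible, the theorem reduces via
\[
|\E|^2=\sum_{t\in\Delta_{\mathbb{F}_q}(\E)}\nu(t)\le |\Delta_{\mathbb{F}_q}(\E)|\cdot\max_{t\neq 0}\nu(t)+\nu(0)
\]
to the pointwise estimate $\nu(t)=(1+o(1))\frac{|\E|^2}{q}+O\!\bigl(q^{(d-1)/2}|\E|\bigr)$ for $t\neq 0$.

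Fixing a nontrivial additive character $\chi$ and writing $\widehat{f}(\mathbf{m})=q^{-d}\sum_{\mathbf{x}}f(\mathbf{x})\chi(-\mathbf{x}\cdot\mathbf{m})$, Fourier inversion gives
\[
\nu(t)=\sum_{\mathbf{x},\mathbf{y}}\E(\mathbf{x})\E(\mathbf{y})\,S_t(\mathbf{x}-\mathbf{y})=q^{2d}\sum_{\mathbf{m}\in\mathbb{F}_q^d}\widehat{S_t}(\mathbf{m})\,\bigl|\widehat{\E}(\mathbf{m})\bigr|^2 .
\]
The frequency $\mathbf{m}=\mathbf{0}$ contributes $q^{2d}\cdot\frac{|S_t|}{q^d}\cdot\frac{|\E|^2}{q^{2d}}=\frac{|S_t|\,|\E|^2}{q^d}=(1+o(1))\frac{|\E|^2}{q}$, using the standard count $|S_t|=q^{d-1}+O(q^{(d-1)/2})$ for $t\neq 0$. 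For the other frequencies the crucial input is the Fourier decay of spheres: for $t\neq 0$ and $\mathbf{m}\neq\mathbf{0}$, $|\widehat{S_t}(\mathbf{m})|\ll q^{-(d+1)/2}$. Granting this, Plancherel's identity $\sum_{\mathbf{m}}|\widehat{\E}(\mathbf{m})|^2=q^{-d}|\E|$ bounds the remaining sum by $q^{2d}\cdot q^{-(d+1)/2}\cdot q^{-d}|\E|=q^{(d-1)/2}|\E|$, giving the claimed error term.

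The Fourier-decay bound is the heart of the matter and is the step I expect to require the most care. Expanding $S_t(\mathbf{x})=q^{-1}\sum_{s\in\mathbb{F}_q}\chi\bigl(s(\|\mathbf{x}\|-t)\bigr)$ and completing the square in $\mathbf{x}$ one gets, for $\mathbf{m}\neq\mathbf{0}$,
\[
\widehat{S_t}(\mathbf{m})=q^{-d-1}\sum_{s\neq 0}\chi(-st)\,\chi\!\Bigl(-\tfrac{\|\mathbf{m}\|}{4s}\Bigr)\,G(s)^d ,
\]
where $G(s)=\sum_{x\in\mathbb{F}_q}\chi(sx^2)=\eta(s)\,G(1)$, with $\eta$ the quadratic character and $|G(1)|=q^{1/2}$. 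Hence $G(s)^d$ depends on $s$ only through $\eta(s)^d$, and the remaining sum over $s$ is a Kloosterman sum (if $d$ is even and $\|\mathbf{m}\|\neq0$), a twisted Kloosterman/Salié-type sum (if $d$ is odd and $\|\mathbf{m}\|\neq0$), or a plain Gauss/character sum (if $\|\mathbf{m}\|=0$); in every case the Weil bound makes it $O(q^{1/2})$, so $|\widehat{S_t}(\mathbf{m})|\ll q^{-d-1}\cdot q^{d/2}\cdot q^{1/2}=q^{-(d+1)/2}$.

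Finally one assembles the estimates. The diagonal is harmless: $|S_0|\ll q^{d-1}$ for $d\ge3$ and $|S_0|\ll q$ for $d=2$, and the Fourier tail of $S_0$ is again controlled, so $\nu(0)=o(|\E|^2)$ once $|\E|\gg q^{d/2}$. If $|\E|\gg q^{(d+1)/2}$ the main term dominates the error uniformly in $t$, so $\nu(t)>0$ for every $t\neq0$ (and $\nu(0)\ge|\E|>0$ trivially), whence $\Delta_{\mathbb{F}_q}(\E)=\mathbb{F}_q$ --- the case $\min\{q,|\E|/q^{(d-1)/2}\}=q$. If instead $q^{d/2}\ll|\E|\le q^{(d+1)/2}$, then $\max_{t\neq0}\nu(t)\ll q^{(d-1)/2}|\E|$ while $\sum_{t\neq0}\nu(t)\ge(1-o(1))|\E|^2$, so $|\Delta_{\mathbb{F}_q}(\E)|\gg|\E|/q^{(d-1)/2}$. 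Keeping track of the implied constants (via the concrete constant hidden in $|\E|\gg q^{d/2}$) yields the bound as stated.
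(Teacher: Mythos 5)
This theorem is quoted in the paper from Iosevich--Rudnev \cite{io} without proof, so there is no internal argument to compare against; your write-up is, in substance, the original proof from that reference. The chain you give --- Fourier inversion of $\nu(t)$ against the sphere, the main term from the zero frequency via $|S_t|=(1+o(1))q^{d-1}$, the decay bound $|\widehat{S_t}(\mathbf{m})|\ll q^{-(d+1)/2}$ from Gauss-sum completion plus the Weil bound for (twisted) Kloosterman sums, Plancherel for the tail, and the two-case assembly --- is correct, with the only caveat being the one you already flag: the constants force the conclusion to read $|\Delta_{\mathbb{F}_q}(\E)|\gg\min\{q,|\E|/q^{(d-1)/2}\}$, with the implied constant absorbed into the hypothesis $|\E|\gg q^{d/2}$, exactly as in \cite{io}.
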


This result implies that if $|\E| \gg q^{(d+1)/2}$, then $|\Delta_{\mathbb{F}_q}(\E)|\gg q$.  Hart, Iosevich, Koh, Rudnev \cite{har} indicated that the threshold $q^{\frac{d+1}{2}}$ is the best possible in odd dimensions, at least in general fields.  The interested reader can find further results in \cite{ben, chap, copi, hanson, kod, sh}.

Recently Shparlinski  \cite{shparlinski} used character sum techniques to discover more properties of the distance sets. In particular, he studied properties of the additive energy of the distance sets, where the additive energy of the distance set corresponding to $\E$ and $\F$ in $\mathbb{F}_q^d$, which is denoted by $E_{+}^k(\E, \F)$, is defined as the cardinality of
\[\left\lbrace (\mathbf{x}_i, \mathbf{y}_i)_{i=1}^{2k}\in (\E\times \F)^{2k}\colon ||\mathbf{x}_1- \mathbf{y}_1||+\cdots+||\mathbf{x}_k-\mathbf{y}_k||=||\mathbf{x}_{k+1}-\mathbf{y}_{k+1}||+\cdots+||\mathbf{x}_{2k}-\mathbf{y}_{2k}||\right\rbrace.\]
When $\E=\F$, we will use the notation $E_{+}^k(\E)$ instead of $E_{+}^k(\E, \F)$. The first result in \cite{shparlinski} is the following theorem.
\begin{theorem}[\textbf{Shparlinski}, \cite{shparlinski}]\label{mot}
For $\E, \F\subseteq \mathbb{F}_q^d$, we have 
\[\left\vert E_{+}^2(\E, \F)-\frac{|\E|^4|\F|^4}{q}\right\vert\le q^{d-1}|\E|^3|\F|^3+q^{\frac{3d}{2}}|\E|^3|\F|^2.\]
\end{theorem}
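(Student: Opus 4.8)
The plan is to expand $E_{+}^{2}(\E,\F)$ by orthogonality of additive characters and then to exploit the fact that $||\cdot||$ is a nondegenerate quadratic form. Fix a nontrivial additive character $\chi$ of $\mathbb{F}_q$ and put
$$S(t):=\sum_{\mathbf{x}\in\E}\sum_{\mathbf{y}\in\F}\chi\big(t\,||\mathbf{x}-\mathbf{y}||\big),\qquad t\in\mathbb{F}_q.$$
Since $\mathbb{1}[a=0]=q^{-1}\sum_{t\in\mathbb{F}_q}\chi(ta)$, the defining equation of $E_{+}^{2}$ unfolds to $E_{+}^{2}(\E,\F)=q^{-1}\sum_{t\in\mathbb{F}_q}|S(t)|^{4}$; the term $t=0$ contributes exactly $|\E|^{4}|\F|^{4}/q$, and because $|S(t)|^{4}\ge 0$ the quantity $q^{-1}\sum_{t\ne 0}|S(t)|^{4}$ equals $\big|E_{+}^{2}(\E,\F)-|\E|^{4}|\F|^{4}/q\big|$. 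So it remains to bound $\sum_{t\ne 0}|S(t)|^{4}$.

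For $t\ne 0$ I would complete the square. Writing $\nu(\mathbf z)=\#\{(\mathbf x,\mathbf y)\in\E\times\F:\mathbf x-\mathbf y=\mathbf z\}$ one has $S(t)=\sum_{\mathbf z}\nu(\mathbf z)\chi(t\,||\mathbf z||)$, and inserting the Fourier expansion $\nu(\mathbf z)=q^{-d}\sum_{\mathbf m}\widehat{\E}(\mathbf m)\overline{\widehat{\F}(\mathbf m)}\chi(\mathbf m\cdot\mathbf z)$ (with $\widehat{\E}(\mathbf m)=\sum_{\mathbf x\in\E}\chi(-\mathbf m\cdot\mathbf x)$) turns the inner sum into a product of one–variable quadratic Gauss sums $\sum_{u}\chi(tu^{2}+cu)=G(t)\chi(-c^{2}/4t)$ with $|G(t)|=q^{1/2}$. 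This yields the exact identity
$$S(t)=\frac{G(t)^{d}}{q^{d}}\sum_{\mathbf m\in\mathbb{F}_q^{d}}\widehat{\E}(\mathbf m)\overline{\widehat{\F}(\mathbf m)}\,\chi\!\Big(-\frac{||\mathbf m||}{4t}\Big).$$
Peeling off $\mathbf m=\mathbf 0$ (a second ``main term'' of size $q^{-d/2}|\E||\F|$), then applying Cauchy–Schwarz on the remaining sum together with Parseval $\sum_{\mathbf m}|\widehat{\E}(\mathbf m)|^{2}=q^{d}|\E|$, gives the pointwise bound
$$|S(t)|\le q^{-d/2}|\E||\F|+q^{-d/2}\big(q^{d}|\E|-|\E|^{2}\big)^{1/2}\big(q^{d}|\F|-|\F|^{2}\big)^{1/2}\le q^{-d/2}|\E||\F|+q^{d/2}|\E|^{1/2}|\F|^{1/2},$$
which, combined with the trivial $|S(t)|\le|\E||\F|$, gives $\max_{t\ne 0}|S(t)|^{2}\le\min\big(|\E|^{2}|\F|^{2},\,q^{d}|\E||\F|\big)$ (I would keep the sharper form, as the factors $q^{d}|\E|-|\E|^{2}$, $q^{d}|\F|-|\F|^{2}$ decay when a set is almost all of $\mathbb{F}_q^{d}$).

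To finish I would bound $\sum_{t\ne 0}|S(t)|^{4}\le\big(\max_{t\ne 0}|S(t)|^{2}\big)\sum_{t\ne 0}|S(t)|^{2}$, estimate the second factor via $\sum_{t\ne 0}|S(t)|^{2}=q\,E_{+}^{1}(\E,\F)-|\E|^{2}|\F|^{2}$ (where $E_{+}^{1}$ is the $k=1$ additive energy, itself handled by the very same Fourier/Gauss–sum device, giving $E_{+}^{1}(\E,\F)\le q^{-1}|\E|^{2}|\F|^{2}+O(q^{d}|\E||\F|)$, and in the crudest version simply $\sum_{t\ne0}|S(t)|^{2}\le (q-1)\max_{t\ne0}|S(t)|^{2}$), and then divide by $q$ and run a short case split comparing $|\E|,|\F|$ with $q^{d/2}$ and $q^{d}$ — using the triv
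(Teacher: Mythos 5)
First, a point of orientation: this paper never proves Theorem \ref{mot} --- it is quoted from Shparlinski's article as background, and the paper's own technique for such energy estimates is entirely different (the expander mixing lemma applied to sum--product graphs, Lemmas \ref{expander} and \ref{sp-graph-lemma}; the concluding remarks even rederive and sharpen this very bound by that route). What you have sketched is, in substance, Shparlinski's original character-sum argument. Your setup is sound: the orthogonality identity $E_{+}^{2}(\E,\F)=q^{-1}\sum_{t}|S(t)|^{4}$, the isolation of the $t=0$ main term, the Gauss-sum identity for $S(t)$ (valid since $q$ is odd), and the pointwise bound $|S(t)|\le q^{-d/2}|\E||\F|+q^{d/2}(|\E||\F|)^{1/2}$ for $t\ne 0$ obtained by peeling off $\mathbf{m}=\mathbf{0}$ and applying Cauchy--Schwarz with Parseval.

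However, the proof is not complete, and the gap is not merely cosmetic. (i) The text breaks off in the middle of the final case analysis, which is exactly where the specific two-term bound $q^{d-1}|\E|^{3}|\F|^{3}+q^{3d/2}|\E|^{3}|\F|^{2}$ has to be extracted; this assembly is genuinely fiddly (in particular the asymmetry of the term $|\E|^{3}|\F|^{2}$ forces either a normalization such as $|\F|\le|\E|$ or an asymmetric pointwise bound like $|S(t)|\le |\E|\,q^{d/2}|\F|^{1/2}$, and you commit to neither). (ii) More seriously, the ``crudest version'' you offer as a fallback, $\sum_{t\ne0}|S(t)|^{2}\le(q-1)\max_{t\ne0}|S(t)|^{2}$, provably cannot yield the theorem. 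Take $\E$ a single point $\mathbf{x}$ and $\F=\{\mathbf{y}\colon ||\mathbf{x}-\mathbf{y}||\in A\}$ with $A$ an interval of about $q/2$ residues (say $q$ prime): then $S(t)$ is essentially $q^{d-1}\sum_{\lambda\in A}\chi(t\lambda)$, so $\max_{t\ne 0}|S(t)|\asymp q^{d}$, and the crude chain gives only $q^{-1}\sum_{t\ne0}|S(t)|^{4}\lesssim q^{4d}$, whereas the theorem asserts a bound of order $q^{4d-1}$ for this configuration (and that is the true order of magnitude). The factor of $q$ is lost precisely at the step $\sum_{t\ne0}|S(t)|^{2}\le(q-1)\max_{t\ne0}|S(t)|^{2}$; the genuine second-moment estimate $\sum_{t\ne0}|S(t)|^{2}=qE_{+}^{1}(\E,\F)-|\E|^{2}|\F|^{2}\le q^{d+1}|\E||\F|$ (which does follow from your pointwise bound, so the ingredient is at hand) is therefore an essential input, not an optional refinement. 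Finally, the theorem as stated carries no implied constant, while each step of your argument loses absolute constants; you would prove it only up to a multiplicative constant, which is harmless for the applications in this paper but should be said explicitly.
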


As a consequence of Theorem \ref{mot}, the author of \cite{shparlinski} obtained the following result on a sumset of the distance set. 
 
\begin{theorem}[\textbf{Shparlinski}, \cite{shparlinski}]\label{comot}
For $\E, \F\subseteq \mathbb{F}_q^d$, we have
\[|\Delta_{\mathbb{F}_q}(\E, \F)+\Delta_{\mathbb{F}_q}(\E, \F)|\ge \frac{1}{3}\min\left\lbrace q, \frac{|\E||\F|^2}{q^{3d/2}}, \frac{|\E||\F|}{q^{d-1}}\right\rbrace,\]
where $\Delta_{\mathbb{F}_q}(\E, \F)=\left\lbrace ||\mathbf{x}-\mathbf{y}||\colon \mathbf{x}\in \E, \mathbf{y}\in \F\right\rbrace$.
\end{theorem}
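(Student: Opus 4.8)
The plan is to derive Theorem~\ref{comot} from the additive-energy estimate in Theorem~\ref{mot} by a standard Cauchy--Schwarz argument, losing only the constant $\tfrac13$. Assume $\E,\F\neq\varnothing$ (otherwise the statement is trivial) and write $D\assign\Delta_{\mathbb{F}_q}(\E,\F)$. For $t\in\mathbb{F}_q$ put
\[
\nu(t)\assign\#\bigl\{(\mathbf{x},\mathbf{y})\in\E\times\F:\ ||\mathbf{x}-\mathbf{y}||=t\bigr\},
\]
so that $\sum_{t\in\mathbb{F}_q}\nu(t)=|\E|\,|\F|$ and $\nu$ is supported exactly on $D$. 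For $s\in\mathbb{F}_q$ put $r(s)\assign\sum_{t_1+t_2=s}\nu(t_1)\nu(t_2)$; then $r$ is supported exactly on $D+D$, one has $\sum_{s}r(s)=(|\E|\,|\F|)^2$, and, expanding the square and grouping the $4$-tuples $(\mathbf{x}_i,\mathbf{y}_i)_{i=1}^4$ according to the four distances $t_i=||\mathbf{x}_i-\mathbf{y}_i||$,
\[
\sum_{s\in\mathbb{F}_q}r(s)^2=\sum_{t_1+t_2=t_3+t_4}\nu(t_1)\nu(t_2)\nu(t_3)\nu(t_4)=E_{+}^2(\E,\F).
\]

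Next I would apply the Cauchy--Schwarz inequality on the support of $r$:
\[
(|\E|\,|\F|)^4=\Bigl(\sum_{s\in D+D}r(s)\Bigr)^2\le|D+D|\cdot\sum_{s\in D+D}r(s)^2=|D+D|\cdot E_{+}^2(\E,\F),
\]
which, since $E_{+}^2(\E,\F)\ge1$, gives $|D+D|\ge(|\E|\,|\F|)^4/E_{+}^2(\E,\F)$. Substituting the bound $E_{+}^2(\E,\F)\le q^{-1}|\E|^4|\F|^4+q^{d-1}|\E|^3|\F|^3+q^{3d/2}|\E|^3|\F|^2$ of Theorem~\ref{mot} into the denominator, and using the elementary fact that $a+b+c\le 3\max\{a,b,c\}$ for positive reals $a,b,c$, the three terms turn into a minimum:
\[
|D+D|\ge\frac{(|\E|\,|\F|)^4}{3\max\Bigl\{\tfrac{|\E|^4|\F|^4}{q},\ q^{d-1}|\E|^3|\F|^3,\ q^{3d/2}|\E|^3|\F|^2\Bigr\}}=\frac13\min\Bigl\{q,\ \frac{|\E|\,|\F|}{q^{d-1}},\ \frac{|\E|\,|\F|^2}{q^{3d/2}}\Bigr\},
\]
which is exactly the claimed inequality (the term $q$ is the contribution of $\tfrac1q|\E|^4|\F|^4$).

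There is no serious obstacle here: the whole content of the theorem is Theorem~\ref{mot}, and the deduction above is essentially forced. The only points that need care are the bookkeeping --- verifying that $r$ is supported precisely on $D+D$, that $\sum_s r(s)^2$ equals $E_{+}^2(\E,\F)$ on the nose (and not some permutation-twisted count), and that the constant in the passage from $1/(a+b+c)$ to $\min\{1/a,1/b,1/c\}$ is $\tfrac13$. It is worth noting that, because this Cauchy--Schwarz step is lossless up to an absolute constant, any improvement of the exponent in Theorem~\ref{comot} of the type announced in the abstract has to come from a sharper additive-energy input (for general $k$, the $2k$-th energy) or, in the plane, from an incidence-geometric bound specific to $d=2$, rather than from refining this argument.
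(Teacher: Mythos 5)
Your argument is correct: the Cauchy--Schwarz step on the support of $r$ combined with the energy bound of Theorem~\ref{mot} is exactly how this theorem is obtained, and it is the same mechanism the paper itself uses to deduce Theorems~\ref{bon} and~\ref{bon1} from the energy estimates (via $|k\Delta_{\mathbb{F}_q}(\E)|\ge |\E|^{4k}/E_{+}^{k}(\E)$). No gaps; the bookkeeping of $\sum_s r(s)^2=E_{+}^2(\E,\F)$ and the factor $\tfrac13$ from $a+b+c\le 3\max\{a,b,c\}$ are handled correctly.
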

\begin{corollary}[\textbf{Shparlinski}, \cite{shparlinski}]\label{nammoi-Addp}
Let $\E$ be a set in $\mathbb{F}_q^d$. Suppose that $q^{\frac{d}{2}+\frac{1}{3}}=o(|\E|)$, then  we have \[|\Delta_{\mathbb{F}_q}(\E)+\Delta_{\mathbb{F}_q}(\E)|= (1-o(1))q.\]
\end{corollary}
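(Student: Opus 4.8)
The plan is to follow the standard route relating a sumset of the distance set to the additive energy studied in Theorem~\ref{mot}: an upper bound on $E_{+}^2(\E)$ yields, via Cauchy--Schwarz, a lower bound on $|\Delta_{\mathbb{F}_q}(\E)+\Delta_{\mathbb{F}_q}(\E)|$. Write $D\assign\Delta_{\mathbb{F}_q}(\E)$, and for $s\in\mathbb{F}_q$ let
\[
R(s)\assign\#\bigl\{(\mathbf{x}_1,\mathbf{y}_1,\mathbf{x}_2,\mathbf{y}_2)\in\E^{4}\colon ||\mathbf{x}_1-\mathbf{y}_1||+||\mathbf{x}_2-\mathbf{y}_2||=s\bigr\}
\]
be the number of ways of writing $s$ as a sum of two distances realised in $\E$. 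Then $\sum_{s\in\mathbb{F}_q}R(s)=|\E|^{4}$, the support of $R$ equals $D+D$, and $\sum_{s\in\mathbb{F}_q}R(s)^{2}=E_{+}^2(\E)$ directly from the definition of the additive energy. By Cauchy--Schwarz,
\[
|\E|^{8}=\Bigl(\sum_{s\in D+D}R(s)\Bigr)^{2}\le |D+D|\sum_{s\in\mathbb{F}_q}R(s)^{2}=|D+D|\cdot E_{+}^2(\E),
\]
so $|D+D|\ge |\E|^{8}/E_{+}^2(\E)$, and everything reduces to showing $E_{+}^2(\E)=(1+o(1))|\E|^{8}/q$.

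For this, apply Theorem~\ref{mot} with $\F=\E$ to get
\begin{align*}
E_{+}^2(\E)
&\le \frac{|\E|^{8}}{q}+q^{d-1}|\E|^{6}+q^{3d/2}|\E|^{5}\\
&=\frac{|\E|^{8}}{q}\left(1+\frac{q^{d}}{|\E|^{2}}+\frac{q^{3d/2+1}}{|\E|^{3}}\right).
\end{align*}
Under the hypothesis $q^{\frac d2+\frac13}=o(|\E|)$ we have $q^{d/2}\le q^{\frac d2+\frac13}=o(|\E|)$, hence $q^{d}/|\E|^{2}=o(1)$, and also $q^{3d/2+1}/|\E|^{3}=\bigl(q^{\frac d2+\frac13}/|\E|\bigr)^{3}=o(1)$. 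Thus $E_{+}^2(\E)=(1+o(1))|\E|^{8}/q$, and combining with the Cauchy--Schwarz bound,
\[
q\ \ge\ |D+D|\ \ge\ \frac{|\E|^{8}}{(1+o(1))|\E|^{8}/q}\ =\ (1-o(1))q,
\]
which is exactly $|\Delta_{\mathbb{F}_q}(\E)+\Delta_{\mathbb{F}_q}(\E)|=(1-o(1))q$.

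Granting Theorem~\ref{mot}, there is no real obstacle in this argument; the one point to watch is the bookkeeping in the final step. Of the two error terms supplied by Theorem~\ref{mot}, the term $q^{3d/2}|\E|^{5}$ is the more restrictive, and requiring it to be negligible against the main term $|\E|^{8}/q$ is precisely the condition $|\E|\gg q^{d/2+1/3}$; this is the origin of the exponent $\tfrac d2+\tfrac13$, and loosening it is exactly what a sharper (higher-moment) energy estimate, of the kind underlying the results announced in the abstract, must achieve. (Invoking Theorem~\ref{comot} with $\F=\E$ would instead give only $|\Delta_{\mathbb{F}_q}(\E)+\Delta_{\mathbb{F}_q}(\E)|\ge q/3$, so the sharp leading constant $1-o(1)$ genuinely requires going back to the energy estimate.)
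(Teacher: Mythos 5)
Your proof is correct and is exactly the intended argument: the paper quotes this corollary from Shparlinski without proof, but your Cauchy--Schwarz reduction $|D+D|\ge |\E|^{8}/E_{+}^2(\E)$ followed by the energy bound is precisely the scheme the paper itself uses to prove its own Theorems \ref{bon} and \ref{bon1}. The bookkeeping identifying $q^{3d/2}|\E|^5$ as the dominant error term and the source of the exponent $\frac d2+\frac13$ is also right.
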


Note that the additive energy of sets is closely related to their combinatorial properties, for example, see \cite{o, o2, o3, o4, o5}. Moreover, some additive character sums can also be estimated via the additive energy, for instance, see \cite{13} for more details.

The main purpose of this paper is to give improvements of Theorems \ref{mot} and \ref{comot} by using methods from spectral graph theory. For the sake of simplicity of this paper, we will consider the case $\E=\F$. We will give some discussions at the end of Section $3$ for the case $\E\ne \F$. Our first result is the following. 

\begin{theorem}\label{hello1}
Let $\mathbb{F}_q$ be a finite field of order $q$ with $q\equiv 3\mod 4$. Let $k\ge 2$ be an integer, and $\E$ be a set in $\mathbb{F}_q^2$ with $|\E|\gg q$. We have
\[ \left\vert E_{+}^k(\E)- \frac{|\E|^{4k}}{q}\right\vert \ll q^{2k-1}|\E|^{2k+\frac{1}{2}}.\]
\end{theorem}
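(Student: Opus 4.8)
The plan is to run a character-sum argument that turns the additive energy into a single exponential sum, and then feed in two estimates for that sum: the trivial pointwise bound and a sharp mean-square bound, the latter coming from spectral graph theory. Fix a nontrivial additive character $\chi$ of $\mathbb{F}_q$ and set $S(m):=\sum_{\mathbf{x},\mathbf{y}\in\E}\chi\bigl(m\,\|\mathbf{x}-\mathbf{y}\|\bigr)$. Writing $\nu_k(s):=\#\{(\mathbf{x}_i,\mathbf{y}_i)_{i=1}^{k}\in(\E\times\E)^{k}:\sum_{i=1}^{k}\|\mathbf{x}_i-\mathbf{y}_i\|=s\}$, orthogonality of characters gives $E_{+}^{k}(\E)=\sum_{s}\nu_k(s)^2=\tfrac1q\sum_{m\in\mathbb{F}_q}|S(m)|^{2k}$, and isolating the term $m=0$ (which contributes $|\E|^{4k}/q$) yields the identity $E_{+}^{k}(\E)-|\E|^{4k}/q=\tfrac1q\sum_{m\neq 0}|S(m)|^{2k}$. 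Since the right-hand side is nonnegative, the theorem is reduced to proving $\sum_{m\neq 0}|S(m)|^{2k}\ll q^{2k}|\E|^{2k+\frac12}$.

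For this I would use two facts. First, for every $m\neq0$ one has the uniform bound $|S(m)|\le q|\E|$: writing $S(m)=\sum_{\mathbf{x}\in\E}f_m(\mathbf{x})$ with $f_m(\mathbf{x})=\sum_{\mathbf{y}\in\E}\chi(m\|\mathbf{x}-\mathbf{y}\|)$, Cauchy--Schwarz gives $|S(m)|^2\le|\E|\sum_{\mathbf{x}\in\mathbb{F}_q^2}|f_m(\mathbf{x})|^2$, and since $\|\mathbf{x}-\mathbf{y}\|-\|\mathbf{x}-\mathbf{y}'\|$ is \emph{linear} in $\mathbf{x}$ and $2m\neq0$, the inner double sum collapses to the diagonal $\mathbf{y}=\mathbf{y}'$ and equals $q^2|\E|$. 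Second, I would establish the sharp mean-square estimate $\sum_{m\neq 0}|S(m)|^2\ll q^{2}|\E|^{5/2}$. Granting these, for $k\ge2$ one peels off $2k-2$ factors at the pointwise bound and keeps a single square:
\[\sum_{m\neq 0}|S(m)|^{2k}\le\Bigl(\max_{m\neq 0}|S(m)|\Bigr)^{2k-2}\sum_{m\neq 0}|S(m)|^2\le(q|\E|)^{2k-2}\cdot q^{2}|\E|^{5/2}=q^{2k}|\E|^{2k+\frac12},\]
which is exactly what is required; here $k\ge2$ is what provides factors to peel, and $|\E|\gg q$ enters only so that the resulting error genuinely improves on the trivial bound.

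The technical heart is the mean-square bound. Summing orthogonality over all $m$ gives $\sum_{m\neq0}|S(m)|^2=q\,E_{+}^{1}(\E)-|\E|^4$, so the estimate is equivalent to the first-moment distance-energy bound $E_{+}^{1}(\E)=\#\{(\mathbf{x},\mathbf{y},\mathbf{x}',\mathbf{y}')\in\E^{4}:\|\mathbf{x}-\mathbf{y}\|=\|\mathbf{x}'-\mathbf{y}'\|\}\ll|\E|^4/q+q|\E|^{5/2}$. Here the congruence $q\equiv3\bmod4$ is used: it forces $\|\mathbf{z}\|=0\iff\mathbf{z}=\mathbf{0}$ in $\mathbb{F}_q^2$, so with $r_\E(t)=\#\{(\mathbf{x},\mathbf{y})\in\E^2:\|\mathbf{x}-\mathbf{y}\|=t\}$ one has $r_\E(0)=|\E|$, every nonzero level set $S_t=\{\mathbf{z}:\|\mathbf{z}\|=t\}$ has exactly $q+1$ points and omits $\mathbf{0}$, and $E_{+}^{1}(\E)=|\E|^2+\sum_{t\neq0}r_\E(t)^2$. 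For $t\neq0$, $r_\E(t)$ is the number of ordered edges of $\E$ in the Cayley graph $\mathcal{G}_t=\mathrm{Cay}(\mathbb{F}_q^2,S_t)$, which is $(q+1)$-regular and whose nontrivial eigenvalues are the sums $\sum_{\xi\in S_t}\chi(\mathbf{a}\cdot\xi)$ $(\mathbf{a}\neq\mathbf{0})$, all of size $O(\sqrt q)$ by Weil's bound. I would feed this eigenvalue information into a second-moment estimate for $\sum_{t\neq0}r_\E(t)^2$; after expanding $r_\E$ by Gauss sums this is, by a short computation, reduced to the circle-restriction estimate $\sum_{\xi\in S_t}|\widehat{\mathbf{1}_{\E}}(\xi)|^2\ll q|\E|^{3/2}$ for all $t\neq0$, which combined with $\sum_{t\neq0}\sum_{\xi\in S_t}|\widehat{\mathbf{1}_{\E}}(\xi)|^2=q^2|\E|-|\E|^2$ closes the argument. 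Assembling the three steps gives $\bigl|E_{+}^{k}(\E)-|\E|^{4k}/q\bigr|\ll q^{2k-1}|\E|^{2k+\frac12}$.

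The main obstacle is precisely this last estimate. A direct application of the expander mixing lemma to each $\mathcal{G}_t$ controls $r_\E(t)$ only pointwise, $r_\E(t)=|\E|^2/q+O(\sqrt q\,|\E|)$; squaring and summing over $t$ then yields only $E_{+}^{1}(\E)\ll|\E|^4/q+q^2|\E|^2$, which is polynomially weaker than $|\E|^4/q+q|\E|^{5/2}$ in the whole range $q\ll|\E|<q^2$, because one cannot afford to bound the $\sum_{t}(r_\E(t)-|\E|^2/q)^2$ term term-by-term. Obtaining the sharp exponent forces one to exploit cancellation across the entire family $\{\mathcal{G}_t\}_{t\neq0}$ simultaneously --- for instance by realizing $\sum_{t}r_\E(t)^2$ as an incidence count for the bilinear form $(\mathbf{x},\mathbf{y})\mapsto\|\mathbf{x}-\mathbf{y}\|$ lifted to a paraboloid in $\mathbb{F}_q^3$ and invoking an $L^2$-restriction bound for that surface, rather than summing the individual mixing errors. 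This is where the planarity $d=2$ and the congruence $q\equiv3\bmod4$ are genuinely used.
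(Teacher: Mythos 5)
Your argument is correct and reaches the stated bound, but it bootstraps from $E_{+}^{1}$ to $E_{+}^{k}$ by a genuinely different route than the paper. The paper proves the recursion $\bigl|E_{+}^{k}(\E)-|\E|^{4k}/q\bigr|\ll q^{2}|\E|^{2}E_{+}^{k-1}(\E)$ by applying the expander mixing lemma (Lemma \ref{expander}) to two multisets of vertices in the sum--product graph $SP_{q,4}$, and then inducts on $k$, with the Koh--Sun bound $E_{+}^{1}(\E)\le |\E|^{4}/q+(1+\sqrt{3})q|\E|^{5/2}$ (Lemma \ref{pro2}) as the base case. You instead write $E_{+}^{k}(\E)-|\E|^{4k}/q=q^{-1}\sum_{m\neq 0}|S(m)|^{2k}$ by orthogonality, peel off $2k-2$ factors at the completed-sum pointwise bound $|S(m)|\le q|\E|$, and finish with $\sum_{m\neq0}|S(m)|^{2}=qE_{+}^{1}(\E)-|\E|^{4}\ll q^{2}|\E|^{5/2}$. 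The two approaches are close cousins: your bound $|S(m)|\le q|\E|$ is exactly the Fourier-analytic content of Vinh's eigenvalue estimate for the sum--product graph (Lemma \ref{sp-graph-lemma}), and both proofs ultimately rest on the same Koh--Sun input for the exponent $5/2$. But your assembly is cleaner and in fact slightly sharper: because you retain the centered quantity $\sum_{m\neq0}|S(m)|^{2}$ (main term already subtracted) rather than the full $E_{+}^{k-1}(\E)$, you never pick up the lower-order term $q^{2}|\E|^{2}\cdot|\E|^{4(k-1)}/q=q|\E|^{4k-2}$ that the paper's induction produces at each step and which actually exceeds the claimed $q^{2k-1}|\E|^{2k+\frac{1}{2}}$ once $|\E|\gg q^{4/3}$; your version also needs no induction on $k$. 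Two small caveats: the hypothesis $|\E|\gg q$ is not merely cosmetic --- it is a hypothesis of the Koh--Sun lemma you invoke for the $L^{2}$ bound, not just a condition for the error to beat the trivial one; and your third paragraph, which re-derives that lemma via circles of $q+1$ points (using $q\equiv 3\bmod 4$), Kloosterman-sum eigenvalue bounds, and an $L^{2}$ restriction estimate for the circle, is the one place where details are only sketched --- this is precisely the paper's Lemma \ref{pro2}, imported as a black box from Koh--Sun, so a citation suffices there and the rest of your proof is complete.
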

Our next theorem is a result on sumsets of the distance set.
\begin{theorem}\label{bon}
Let $\mathbb{F}_q$ be a finite field of order $q$ with $q\equiv 3\mod 4$. Let $k\ge 2$ be an integer, and $\E$ be a set in $\mathbb{F}_q^2$. Suppose that $q^{1+\frac{1}{4k-1}}=o(|\E|)$,  then we have 
\[|k\Delta_{\mathbb{F}_q}(\E)|=(1-o(1))q.\]
\end{theorem}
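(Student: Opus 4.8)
The plan is to deduce Theorem \ref{bon} from the additive-energy estimate of Theorem \ref{hello1} via the standard Cauchy--Schwarz bound relating the size of an iterated sumset to its additive energy. Write $S := k\Delta_{\mathbb{F}_q}(\E) \subseteq \mathbb{F}_q$, and for $s \in \mathbb{F}_q$ let
\[
N(s) := \#\left\{ (\mathbf{x}_i, \mathbf{y}_i)_{i=1}^{k} \in (\E \times \E)^{k} \colon \sum_{i=1}^k ||\mathbf{x}_i - \mathbf{y}_i|| = s \right\}
\]
be the number of ordered representations of $s$ as a sum of $k$ distances from $\E$. Then $N(s) = 0$ unless $s \in S$, and counting all $k$-tuples in $(\E\times\E)^k$ gives $\sum_{s \in S} N(s) = |\E|^{2k}$. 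On the other hand, a pair consisting of a $k$-tuple and another $k$-tuple with the same distance-sum is precisely the data of a $2k$-tuple satisfying the defining equation of the additive energy, so $\sum_{s \in S} N(s)^2 = E_{+}^k(\E)$.

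By Cauchy--Schwarz,
\[
|\E|^{4k} = \left( \sum_{s \in S} N(s) \right)^2 \leq |S| \sum_{s \in S} N(s)^2 = |S| \, E_{+}^k(\E),
\]
hence $|S| \geq |\E|^{4k} / E_{+}^k(\E)$. Next I would insert Theorem \ref{hello1}. The hypothesis $q^{1+\frac{1}{4k-1}} = o(|\E|)$ in particular forces $|\E| \gg q$, so Theorem \ref{hello1} applies and yields $E_{+}^k(\E) \leq |\E|^{4k}/q + O\!\left(q^{2k-1}|\E|^{2k+1/2}\right)$. The error term is $o(|\E|^{4k}/q)$ exactly when $q^{2k}|\E|^{2k+1/2} = o(|\E|^{4k})$, i.e. $q^{2k} = o(|\E|^{2k-1/2})$, i.e. $q^{\frac{4k}{4k-1}} = q^{1+\frac{1}{4k-1}} = o(|\E|)$ — which is precisely the assumed threshold. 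Therefore $E_{+}^k(\E) = (1+o(1))|\E|^{4k}/q$, giving $|S| \geq (1-o(1))q$; combined with the trivial bound $|S| \leq |\mathbb{F}_q| = q$ this gives $|k\Delta_{\mathbb{F}_q}(\E)| = (1-o(1))q$.

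There is no real obstacle in this deduction: the substance is contained in Theorem \ref{hello1}, and what remains is bookkeeping. The only points deserving a line of care are the combinatorial identity $\sum_s N(s)^2 = E_{+}^k(\E)$ (just unwinding the definition of the energy) and the exponent chase that shows the error term in Theorem \ref{hello1} is genuinely absorbed by the main term — it is this computation that pins down the value $1+\frac{1}{4k-1}$ of the threshold.
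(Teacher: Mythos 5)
Your proposal is correct and follows essentially the same route as the paper: define the representation function $N(\lambda)$ for sums of $k$ distances, note $\sum_\lambda N(\lambda)=|\E|^{2k}$ and $\sum_\lambda N(\lambda)^2=E_{+}^k(\E)$, apply Cauchy--Schwarz to get $|k\Delta_{\mathbb{F}_q}(\E)|\ge |\E|^{4k}/E_{+}^k(\E)$, and then invoke Theorem \ref{hello1}. Your explicit exponent chase confirming that the threshold $q^{1+\frac{1}{4k-1}}=o(|\E|)$ is exactly what makes the error term negligible is a welcome addition that the paper leaves implicit.
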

As consequences of Theorem \ref{hello1} and Theorem \ref{bon}, we are able to improve Theorem \ref{mot} and Corollary \ref{nammoi-Addp} in the case $d=2$.
\begin{corollary}\label{hai}
Let $\mathbb{F}_q$ be a finite field of order $q$ with $q\equiv 3\mod 4$. Let $\E$ be a set in $\mathbb{F}_q^2$. Suppose that $|\E|\gg q$, then we have 
\[\left\vert E_{+}^2(\E)-\frac{|\E|^8}{q}\right\vert \ll q^3|\E|^{9/2}.\]
\end{corollary}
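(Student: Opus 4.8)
The plan is to read off Corollary~\ref{hai} as the special case $k=2$ of Theorem~\ref{hello1}. Substituting $k=2$ there produces the main term $|\E|^{4k}=|\E|^{8}$ and the error term $q^{2k-1}|\E|^{2k+\frac12}=q^{3}|\E|^{9/2}$, and the hypotheses of that theorem ($q\equiv 3\bmod 4$, $\E\subseteq\mathbb{F}_q^{2}$, $|\E|\gg q$, $k\ge 2$) are exactly those under which Corollary~\ref{hai} is stated. So, granting Theorem~\ref{hello1}, there is nothing further to do, and the only nonvacuous point to spell out is the assertion -- implicit in the phrase ``improve Theorem~\ref{mot}'' -- that the new error term really does beat the one coming from Theorem~\ref{mot}.

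To see that, specialize Theorem~\ref{mot} to $\F=\E$ and $d=2$, which yields $\bigl|E_{+}^{2}(\E)-|\E|^{8}/q\bigr|\le q|\E|^{6}+q^{3}|\E|^{5}$, and compare it with $q^{3}|\E|^{9/2}$ over the range $|\E|\gg q$, splitting at $|\E|=q^{2}$, where the two summands $q|\E|^{6}$ and $q^{3}|\E|^{5}$ of the old bound are comparable. For $q\ll|\E|\le q^{2}$ the old bound is $\asymp q^{3}|\E|^{5}$, while $q^{3}|\E|^{9/2}=q^{3}|\E|^{5}\cdot|\E|^{-1/2}$ is smaller by a factor $|\E|^{1/2}$. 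For $|\E|\ge q^{2}$ the old bound is $\asymp q|\E|^{6}$, and $q^{3}|\E|^{9/2}\le q|\E|^{6}$ is equivalent to $q^{2}\le|\E|^{3/2}$, which holds since $|\E|\ge q^{2}$. So the new estimate improves on Theorem~\ref{mot} throughout $|\E|\gg q$, at the sole cost of the extra congruence condition, and in the principal regime $|\E|\le q^{2}$ it lowers the dominant exponent by a clean $|\E|^{1/2}$. (The same energy input, inserted into a Cauchy--Schwarz lower bound of the shape $|k\Delta_{\mathbb{F}_q}(\E)|\gg (\#\{k\text{-fold distance sums}\})^{2}/E_{+}^{k}(\E)$, is what drives the $d=2$ case of Theorem~\ref{bon} and thereby sharpens Corollary~\ref{nammoi-Addp}.)

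Thus no obstacle arises in Corollary~\ref{hai} itself once Theorem~\ref{hello1} is available; the real work lies one level up. Were I to prove Theorem~\ref{hello1} directly, I would set $\nu(t)=\mathbf{1}_{\E}^{\top}A_{t}\mathbf{1}_{\E}$, with $A_{t}$ the adjacency matrix of the graph on $\mathbb{F}_q^{2}$ joining points at distance $t$, pass to the twisted count $\widehat{\nu}(m)=\sum_{\mathbf{x},\mathbf{y}\in\E}\psi(m\|\mathbf{x}-\mathbf{y}\|)$ for a fixed nontrivial additive character $\psi$ of $\mathbb{F}_q$, and use orthogonality to get $E_{+}^{k}(\E)=\tfrac1q\sum_{m\in\mathbb{F}_q}|\widehat{\nu}(m)|^{2k}$, whose $m=0$ term is precisely $|\E|^{4k}/q$. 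The remaining sum $\tfrac1q\sum_{m\ne 0}|\widehat{\nu}(m)|^{2k}$ is where the hypothesis $q\equiv 3\bmod 4$ is used: it makes $x^{2}+y^{2}$ anisotropic, so the distance graphs on $\mathbb{F}_q^{2}$ have nontrivial eigenvalues $O(\sqrt q)$ (classical character sums over circles), which one would combine with the Parseval identity $\sum_{m}|\widehat{\nu}(m)|^{2}=q\sum_{t}\nu(t)^{2}$ and the expander mixing lemma for these graphs. The crux, which I expect to be the genuine difficulty, is to balance a bound for $\widehat{\nu}(m)$ ($m\ne 0$) against this $\ell^{2}$ average so as to arrive at the exponent $2k+\tfrac12$: a direct $\ell^{\infty}$-times-$\ell^{2}$ estimate is lossy in the range $q\ll|\E|\lesssim q^{2}$, because $\max_{m\ne 0}|\widehat{\nu}(m)|$ can be as large as $q|\E|$ (take $\E=\mathbb{F}_q^{2}$, for which $|\widehat{\nu}(m)|=q|\E|$), so the extra saving has to come from a more careful use of the spectral structure, presumably after separating off the anomalous term $t=0$.
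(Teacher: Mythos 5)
Your proposal is correct and matches the paper exactly: Corollary~\ref{hai} is nothing more than the case $k=2$ of Theorem~\ref{hello1}, with $q^{2k-1}|\E|^{2k+\frac12}$ specializing to $q^{3}|\E|^{9/2}$, which is precisely how the paper obtains it. The additional comparison with Theorem~\ref{mot} and the sketch of a direct proof of Theorem~\ref{hello1} are not needed for the corollary itself.
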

\begin{corollary}\label{hai1}
Let $\mathbb{F}_q$ be a finite field of order $q$ with $q\equiv 3\mod 4$. Let $\E$ be a set in $\mathbb{F}_q^2$. Suppose that $q^{8/7}=o(|\E|)$,  then we have 
\[|\Delta_{\mathbb{F}_q}(\E)+\Delta_{\mathbb{F}_q}(\E)|=(1-o(1))q.\]
\end{corollary}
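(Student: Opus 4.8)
The plan is to deduce the statement directly from the energy estimate of Corollary~\ref{hai}; in fact it is exactly the case $k=2$ of Theorem~\ref{bon} (note $\Delta_{\mathbb F_q}(\E)+\Delta_{\mathbb F_q}(\E)=2\Delta_{\mathbb F_q}(\E)$ and $1+\tfrac1{4\cdot2-1}=\tfrac87$), and the deduction is the standard Cauchy--Schwarz popularity argument. Concretely, for $t\in\mathbb F_q$ set
\[
  \nu(t)\assign\#\bigl\{(\mathbf x_1,\mathbf y_1,\mathbf x_2,\mathbf y_2)\in\E^4 : \|\mathbf x_1-\mathbf y_1\|+\|\mathbf x_2-\mathbf y_2\|=t\bigr\}.
\]
Then $\sum_t\nu(t)=|\E|^4$, the support of $\nu$ is precisely $\Delta_{\mathbb F_q}(\E)+\Delta_{\mathbb F_q}(\E)$, and $\sum_t\nu(t)^2=E_+^2(\E)$, so Cauchy--Schwarz gives
\[
  |\E|^8=\Bigl(\sum_t\nu(t)\Bigr)^2\le\bigl|\Delta_{\mathbb F_q}(\E)+\Delta_{\mathbb F_q}(\E)\bigr|\cdot E_+^2(\E),
\]
i.e.\ $\bigl|\Delta_{\mathbb F_q}(\E)+\Delta_{\mathbb F_q}(\E)\bigr|\ge|\E|^8/E_+^2(\E)$.

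Next I would substitute the bound $E_+^2(\E)\le|\E|^8/q+Cq^3|\E|^{9/2}$ from Corollary~\ref{hai} (available since $q^{8/7}=o(|\E|)$ forces $|\E|\gg q$), obtaining
\[
  \bigl|\Delta_{\mathbb F_q}(\E)+\Delta_{\mathbb F_q}(\E)\bigr|\ \ge\ \frac{|\E|^8}{|\E|^8/q+Cq^3|\E|^{9/2}}\ =\ \frac{q}{1+Cq^{4}|\E|^{-7/2}}.
\]
The hypothesis $q^{8/7}=o(|\E|)$ is exactly what makes $q^{4}|\E|^{-7/2}=o(1)$, so the right-hand side is $(1-o(1))q$; together with the trivial upper bound $\bigl|\Delta_{\mathbb F_q}(\E)+\Delta_{\mathbb F_q}(\E)\bigr|\le q$ this yields the claim. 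Nothing in this part is delicate beyond the threshold bookkeeping.

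Consequently the real content sits upstream, in Corollary~\ref{hai} (i.e.\ Theorem~\ref{hello1}), and that is where I expect the only genuine difficulty. Were I to prove it from scratch I would use the spectral theory of the distance graph $\mathcal D_t$ on $\mathbb F_q^2$ (with $\mathbf x\sim\mathbf y$ iff $\|\mathbf x-\mathbf y\|=t$), which for $t\ne0$ and $q\equiv 3\bmod 4$ is $(q+1)$-regular on $q^2$ vertices with all nontrivial eigenvalues of order $O(\sqrt q)$. Writing $f(t)=\#\{(\mathbf x,\mathbf y)\in\E^2:\|\mathbf x-\mathbf y\|=t\}$, the expander mixing lemma gives $f(t)=|\E|^2/q+O(\sqrt q\,|\E|)$ for $t\ne0$; since the number of representations of an element as a sum of $k$ distances is the $k$-fold additive convolution $f^{*k}$ over $\mathbb F_q$, setting $g\assign f-\tfrac{|\E|^2}{q}\mathbf 1$ (a mean-zero function) one checks $f^{*k}=\tfrac{|\E|^{2k}}{q}\mathbf 1+g^{*k}$, whence $E_+^k(\E)-|\E|^{4k}/q=\|g^{*k}\|_2^2$, and it remains only to show $\|g^{*k}\|_2^2\ll q^{2k-1}|\E|^{2k+1/2}$. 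This last estimate is the main obstacle: getting the sharp exponent --- in particular the extra half power of $|\E|$ saved over the trivial bound --- forces one to exploit the Gauss-sum/eigenvalue cancellation inside $\widehat g$ rather than crude $\ell^1$ or $\ell^\infty$ control of $g$, and one must also dispose of the degenerate distance $t=0$ (where the ``sphere'' is a single point) separately.
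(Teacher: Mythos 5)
Your deduction is exactly the paper's: the same Cauchy--Schwarz popularity argument (identical to the proof of Theorem~\ref{bon} specialized to $k=2$) combined with the energy bound of Corollary~\ref{hai}, and your threshold bookkeeping $q^{4}|\E|^{-7/2}=o(1)$ is correct. The additional sketch of how one would establish Corollary~\ref{hai} itself is not needed for this statement, but the route you outline (spectral bounds for the distance graph plus convolution identities) is consistent with the paper's actual machinery.
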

When $\E$ is a subset in $\mathbb{F}_q^d$ with $d\ge 3$, by using the same techniques, we obtain a similar result as follows.
\begin{theorem}\label{2017}
Let $\mathbb{F}_q$ be a finite field of order $q$. Let $k\ge 2$ be an integer, and $\E$ be a set in $\mathbb{F}_q^d$, $d\ge 3$.  We have the following
\[ \left\vert E_{+}^k(\E)- \frac{|\E|^{4k}}{q}\right\vert \ll q^{dk}|\E|^{2k}.\]
\end{theorem}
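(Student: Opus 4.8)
The plan is to linearize the additive energy by orthogonality of additive characters, which reduces everything to a single pointwise bound on an exponential sum, and then to obtain that bound from the (spectral) eigenvalue estimate for the translation-invariant matrix on $\mathbb{F}_q^{d}$ attached to the distance function. Fix a nontrivial additive character $\chi$ of $\mathbb{F}_q$ and set
\[
  S(t)\assign\sum_{\mathbf{x},\mathbf{y}\in\E}\chi\bigl(t\,||\mathbf{x}-\mathbf{y}||\bigr),\qquad t\in\mathbb{F}_q .
\]
Rewriting the constraint $\sum_{i=1}^{k}||\mathbf{x}_i-\mathbf{y}_i||=\sum_{i=k+1}^{2k}||\mathbf{x}_i-\mathbf{y}_i||$ as $\tfrac1q\sum_{t\in\mathbb{F}_q}\chi\bigl(t(\,\cdots)\bigr)$, the $2k$ pairs decouple — each pair with index $i>k$ contributing a factor $\overline{S(t)}=S(-t)$ — and we obtain the exact identity
\[
  E_{+}^{k}(\E)=\frac1q\sum_{t\in\mathbb{F}_q}|S(t)|^{2k}.
\]
The term $t=0$ is $\tfrac1q|S(0)|^{2k}=|\E|^{4k}/q$; since every summand is nonnegative this already gives $E_{+}^{k}(\E)\ge|\E|^{4k}/q$, so it only remains to bound $\tfrac1q\sum_{t\ne0}|S(t)|^{2k}$ from above.

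The one genuinely non-trivial input is the uniform estimate
\[
  |S(t)|\le q^{d/2}|\E|\qquad\text{for every }t\ne0 .
\]
Spectrally, this says that the $q^{d}\times q^{d}$ matrix $A_t$ with entries $A_t(\mathbf{x},\mathbf{y})=\chi(t\,||\mathbf{x}-\mathbf{y}||)$ — a Cayley-graph-type matrix for the group $(\mathbb{F}_q^{d},+)$ — has operator norm $q^{d/2}$, so that $|S(t)|=|\mathbf{1}_{\E}^{\top}A_t\mathbf{1}_{\E}|\le q^{d/2}|\E|$, where $\mathbf{1}_{\E}$ is the indicator vector of $\E$. I would prove it directly by Cauchy--Schwarz in $\mathbf{x}$ together with orthogonality: enlarging the outer sum to all of $\mathbb{F}_q^{d}$,
\begin{align*}
  |S(t)|^{2}&\le|\E|\sum_{\mathbf{x}\in\mathbb{F}_q^{d}}\Bigl|\sum_{\mathbf{y}\in\E}\chi\bigl(t\,||\mathbf{x}-\mathbf{y}||\bigr)\Bigr|^{2}\\
  &=|\E|\sum_{\mathbf{y},\mathbf{y}'\in\E}\ \sum_{\mathbf{x}\in\mathbb{F}_q^{d}}\chi\bigl(t(||\mathbf{x}-\mathbf{y}||-||\mathbf{x}-\mathbf{y}'||)\bigr),
\end{align*}
and since $||\mathbf{x}-\mathbf{y}||-||\mathbf{x}-\mathbf{y}'||=-2\,\mathbf{x}\cdot(\mathbf{y}-\mathbf{y}')+||\mathbf{y}||-||\mathbf{y}'||$, the pure-square terms $||\mathbf{x}||$ cancel, so the inner sum over $\mathbf{x}$ equals $q^{d}$ if $\mathbf{y}=\mathbf{y}'$ and $0$ otherwise (here, and only here, does an arithmetic hypothesis enter: one needs $2t\ne0$, i.e. $q$ odd). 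Hence $|S(t)|^{2}\le q^{d}|\E|^{2}$. The cancellation of the pure squares is the one point that needs care; the rest of this step is bookkeeping, and I do not anticipate any deeper obstacle in the argument.

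Combining the identity with this estimate and summing over the $q-1$ nonzero values of $t$ gives
\[
  \Bigl|E_{+}^{k}(\E)-\frac{|\E|^{4k}}{q}\Bigr|=\frac1q\sum_{t\ne0}|S(t)|^{2k}\le\frac{q-1}{q}\bigl(q^{d/2}|\E|\bigr)^{2k}\le q^{dk}|\E|^{2k},
\]
which is exactly the claimed bound. Two remarks. First, the proof uses nothing about $q$ beyond it being odd, which is why — unlike Theorem~\ref{hello1} — the statement for $d\ge3$ carries no congruence condition: the nonzero isotropic vectors that necessarily exist once $d\ge3$ never enter this character-sum formulation, whereas in the plane one must exclude $q\equiv1\bmod4$ to avoid them. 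Second, the exponent $q^{dk}|\E|^{2k}$ is the natural endpoint of this method: even the sharper route of bounding only $2k-2$ of the factors $|S(t)|$ by $q^{d/2}|\E|$ and controlling $\sum_{t\ne0}|S(t)|^{2}=q\sum_{s}r(s)^{2}-|\E|^{4}$ (with $r(s)=|\{(\mathbf{x},\mathbf{y})\in\E^{2}:||\mathbf{x}-\mathbf{y}||=s\}|$) leads to the same exponent, because $\sum_{s}r(s)^{2}$ can reach $q^{d}|\E|^{2}$ for structured $\E$, e.g. $\E$ an isotropic subspace of half the dimension when $d$ is even.
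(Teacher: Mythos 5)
Your argument is correct, and it reaches the stated bound by a genuinely different route from the paper. The paper proves Theorem \ref{2017} by induction on $k$: the inductive step is Lemma \ref{hello}, which encodes $E_{+}^{k}(\E)$ as an edge count between two multisets of vertices of the sum-product graph $SP_{q,2d}$ and invokes the expander mixing lemma (Lemma \ref{expander}) together with Vinh's eigenvalue bound (Lemma \ref{sp-graph-lemma}) to get $\bigl|E_{+}^{k}(\E)-|\E|^{4k}/q\bigr|\ll q^{d}|\E|^{2}E_{+}^{k-1}(\E)$, and the base case is the Koh--Sun estimate $E_{+}^{1}(\E)\le |\E|^{4}/q+q^{d}|\E|^{2}$ (Lemma \ref{pro21}). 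You instead prove the exact orthogonality identity $E_{+}^{k}(\E)=q^{-1}\sum_{t}|S(t)|^{2k}$ and combine it with the uniform bound $|S(t)|\le q^{d/2}|\E|$ for $t\neq 0$, which you derive by the same complete-the-square cancellation that underlies Vinh's spectral estimate; your use of the standing hypothesis that $q$ is odd (to ensure $2t\neq 0$) is exactly where that hypothesis is needed. Your route buys several things: it is self-contained (no appeal to \cite{kod}), it is valid for every $k\ge 1$ and every $d\ge 1$, it shows moreover that $E_{+}^{k}(\E)\ge |\E|^{4k}/q$, and it delivers the bound $q^{dk}|\E|^{2k}$ with constant $1$ in all regimes --- whereas the inductive route also produces a term $q^{d-1}|\E|^{4k-2}$ (from feeding the main term of $E_{+}^{k-1}(\E)$ into Lemma \ref{hello}), which is dominated by $q^{dk}|\E|^{2k}$ only when $|\E|\ll q^{\frac{d}{2}+\frac{1}{2(k-1)}}$, so your direct argument in fact gives a cleaner justification of the theorem exactly as stated. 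What the graph-theoretic route buys in exchange is the sharper two-dimensional Theorem \ref{hello1}: there the stronger input Lemma \ref{pro2} (which needs $q\equiv 3\bmod 4$) is inserted at the base of the induction, an improvement that a uniform pointwise bound on $|S(t)|$ cannot reproduce, as your closing remark correctly anticipates.
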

As an application of Theorem \ref{2017}, we are able to improve Corollary \ref{nammoi-Addp} in the case $d\ge 3$.
\begin{theorem}\label{bon1}
Let $\mathbb{F}_q$ be a finite field of order $q$. Let $k\ge 2$ be an integer, and $\E$ be a set in $\mathbb{F}_q^d$ with $d\ge 3$. Suppose that $q^{\frac{d}{2}+\frac{1}{2k}}=o(|\E|)$,  then we have 
\[|k\Delta_{\mathbb{F}_q}(\E)|=(1-o(1))q.\]
\end{theorem}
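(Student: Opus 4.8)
\noindent\textbf{Proof strategy for Theorem~\ref{bon1}.} The plan is to deduce the statement from the energy estimate of Theorem~\ref{2017} by the standard Cauchy--Schwarz argument that converts an upper bound on an additive energy into a lower bound on the associated sumset. First I would introduce, for each $t\in\mathbb{F}_q$, the counting function
\[
\nu(t)\assign \#\left\{(\mathbf{x}_i,\mathbf{y}_i)_{i=1}^k\in(\E\times\E)^k\colon ||\mathbf{x}_1-\mathbf{y}_1||+\cdots+||\mathbf{x}_k-\mathbf{y}_k||=t\right\},
\]
and record the three identities $\sum_{t\in\mathbb{F}_q}\nu(t)=|\E|^{2k}$, $\sum_{t\in\mathbb{F}_q}\nu(t)^2=E_{+}^k(\E)$, and $\{t\colon \nu(t)>0\}=k\Delta_{\mathbb{F}_q}(\E)$; the last of these is the point at which one checks that the sumset $k\Delta_{\mathbb{F}_q}(\E)$ is exactly the support of $\nu$, so that the Cauchy--Schwarz step below loses nothing beyond the cardinality of that support.

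Next, applying Cauchy--Schwarz to the sum of $\nu$ over its support gives
\[
|\E|^{4k}=\left(\sum_{t\in k\Delta_{\mathbb{F}_q}(\E)}\nu(t)\right)^2\le |k\Delta_{\mathbb{F}_q}(\E)|\cdot \sum_{t\in\mathbb{F}_q}\nu(t)^2=|k\Delta_{\mathbb{F}_q}(\E)|\cdot E_{+}^k(\E),
\]
hence $|k\Delta_{\mathbb{F}_q}(\E)|\ge |\E|^{4k}/E_{+}^k(\E)$. Feeding in the upper bound $E_{+}^k(\E)\le |\E|^{4k}/q+O\!\left(q^{dk}|\E|^{2k}\right)$ from Theorem~\ref{2017} yields
\[
|k\Delta_{\mathbb{F}_q}(\E)|\ge \frac{|\E|^{4k}}{|\E|^{4k}/q+O\!\left(q^{dk}|\E|^{2k}\right)}=\frac{q}{1+O\!\left(q^{dk+1}|\E|^{-2k}\right)}.
\]
The hypothesis $q^{d/2+1/(2k)}=o(|\E|)$ is exactly the assertion that $|\E|^{2k}$ strictly dominates $q^{dk+1}$ (write the error as $\left(q^{d/2+1/(2k)}/|\E|\right)^{2k}$), so the error term is $o(1)$; together with the trivial inequality $|k\Delta_{\mathbb{F}_q}(\E)|\le q$ this gives $|k\Delta_{\mathbb{F}_q}(\E)|=(1-o(1))q$.

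Since Theorem~\ref{2017} is taken as given, there is no genuine obstacle in this argument; the only points needing a little care are (i) the identification of $\{t\colon\nu(t)>0\}$ with $k\Delta_{\mathbb{F}_q}(\E)$, and (ii) the bookkeeping that the threshold coming out of the computation, namely $|\E|\gg q^{(dk+1)/(2k)}=q^{d/2+1/(2k)}$, coincides with the stated hypothesis. It is worth noting that this is precisely the mechanism by which Corollary~\ref{nammoi-Addp} was obtained from Theorem~\ref{mot} in \cite{shparlinski}; the improvement of the threshold from $q^{d/2+1/3}$ to $q^{d/2+1/(2k)}$ (which approaches $q^{d/2}$ as $k\to\infty$) comes entirely from replacing Shparlinski's second-moment estimate by the sharper bound of Theorem~\ref{2017} and from being free to take $k$ large.
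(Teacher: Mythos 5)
Your proposal is correct and is essentially identical to the paper's argument: the paper proves Theorem~\ref{bon1} by running the proof of Theorem~\ref{bon} verbatim (Cauchy--Schwarz on the representation function $N_\lambda$ to get $|k\Delta_{\mathbb{F}_q}(\E)|\ge |\E|^{4k}/E_{+}^k(\E)$) with Theorem~\ref{2017} substituted for Theorem~\ref{hello1}. Your bookkeeping of the threshold $q^{d/2+1/(2k)}$ is also the right verification.
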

The rest of this paper is organized as follows: in Section $2$, we recall some graph-theoretic tools; proofs of Theorems \ref{hello1}, \ref{bon}, \ref{2017}, and \ref{bon1} are given in Section $3$. 
\section{Graph-theoretic tools}
\subsection{Expander mixing lemma}
For a graph $G$ of order $n$, let $\gamma_1 \geq \gamma_2 \geq \ldots \geq \gamma_n$ be
the eigenvalues of its adjacency matrix. The quantity $\gamma (G) = \max
\{\gamma_2, - \gamma_n \}$ is called the second eigenvalue of $G$. A graph $G
= (V, E)$ is called an $(n, d, \gamma)$-graph if it is $d$-regular, has $n$
vertices, and the second eigenvalue of $G$ is at most $\gamma$. 

Suppose that $B$ and $C$ are two multi-sets of vertices in an $(n, d, \gamma)$-graph. Let $m_X(x)$ denote the multiplicity of $x$ in $X$, and $e_m(B, C)$ be the number of edges with multiplicity between $B$ and $C$ in $G$, by multiplicity we mean that  if there is an edge between $b\in B$ and $c\in C$, then this edge will be counted $m_B(b)\cdot m_C(c)$ times in $e_m(B, C)$. Recently, Hanson et al. \cite{hanson} gave the following estimate on $e_m(B, C)$ in an $(n, d, \gamma)$-graph.
\begin{lemma}[\cite{hanson}]\label{expander}
Let $G=(V, E)$ be an $(n,d, \gamma)$-graph. The number of edges between two multi-sets of vertices $B$ and $C$ in $G$ satisfies:
\[\left\vert e_m(B, C)-\frac{d\left(\sum_{b\in B}m_B(b)\right)\left(\sum_{c\in C}m_C(c)\right)}{n}\right\vert\le \gamma\sqrt{\sum_{b\in B}m_B(b)^2}\sqrt{\sum_{c\in C}m_C(c)^2},\] where $m_X(x)$ is the multiplicity of $x$ in $X$.
\end{lemma}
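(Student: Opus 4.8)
The plan is to translate the weighted edge count into a bilinear form in the adjacency matrix of $G$ and then apply the spectral decomposition afforded by $d$-regularity. To each multi-set associate its multiplicity vector: let $\mathbf{b}, \mathbf{c} \in \mathbb{R}^V$ be given by $\mathbf{b}_x = m_B(x)$ and $\mathbf{c}_x = m_C(x)$, and let $A$ be the adjacency matrix of $G$. Counting each edge between $b \in B$ and $c \in C$ with weight $m_B(b)\,m_C(c)$ is then exactly $e_m(B,C) = \sum_{x,y} A_{xy}\, m_B(x)\, m_C(y) = \mathbf{b}^{T} A\, \mathbf{c}$, so the task reduces to estimating the quadratic form $\mathbf{b}^{T} A\, \mathbf{c}$.

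Next I would exploit regularity. Since $G$ is $d$-regular, the all-ones vector $\mathbf{1}$ is an eigenvector of $A$ with eigenvalue $\gamma_1 = d$, and because $A$ is symmetric its eigenvectors form an orthogonal basis, so the hyperplane $\mathbf{1}^{\perp}$ is $A$-invariant. Write $\mathbf{b} = \alpha \mathbf{1} + \mathbf{b}'$ and $\mathbf{c} = \beta \mathbf{1} + \mathbf{c}'$ with $\mathbf{b}', \mathbf{c}' \perp \mathbf{1}$; projecting onto $\mathbf{1}$ gives $\alpha = \frac{1}{n}\sum_{b\in B} m_B(b)$ and $\beta = \frac{1}{n}\sum_{c\in C} m_C(c)$. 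Expanding the form and using $A\mathbf{1} = d\mathbf{1}$ together with $\mathbf{1}^{T}\mathbf{b}' = \mathbf{1}^{T}\mathbf{c}' = 0$, the two cross terms vanish and one obtains $\mathbf{b}^{T} A\, \mathbf{c} = \alpha\beta d n + (\mathbf{b}')^{T} A\, \mathbf{c}'$. A direct computation shows $\alpha\beta d n = \frac{d\left(\sum_{b} m_B(b)\right)\left(\sum_{c} m_C(c)\right)}{n}$, which is precisely the main term in the statement.

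It then remains to bound the error term $(\mathbf{b}')^{T} A\, \mathbf{c}'$. By definition $\gamma = \max\{\gamma_2, -\gamma_n\}$ is the largest absolute value among the eigenvalues of $A$ other than $\gamma_1 = d$; since these are exactly the eigenvalues of $A$ acting on the invariant subspace $\mathbf{1}^{\perp}$, the operator norm of $A$ restricted to $\mathbf{1}^{\perp}$ is at most $\gamma$. Applying Cauchy--Schwarz and this norm bound yields $\left|(\mathbf{b}')^{T} A\, \mathbf{c}'\right| \le \|\mathbf{b}'\|\,\|A\,\mathbf{c}'\| \le \gamma\,\|\mathbf{b}'\|\,\|\mathbf{c}'\|$. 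Finally, since $\mathbf{b}'$ is an orthogonal projection of $\mathbf{b}$, we have $\|\mathbf{b}'\| \le \|\mathbf{b}\| = \sqrt{\sum_{b} m_B(b)^2}$, and likewise $\|\mathbf{c}'\| \le \sqrt{\sum_{c} m_C(c)^2}$; combining these estimates gives exactly the claimed inequality.

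I do not anticipate a genuine obstacle, as the argument is the standard spectral proof of the expander mixing lemma with the set indicator vectors simply replaced by multiplicity vectors. The only points requiring care are bookkeeping: ensuring that the factors of $n$ in $\alpha$ and $\beta$ produce the main term with the correct normalization $d/n$, and confirming that $\gamma$ as defined coincides with the spectral norm of $A$ on $\mathbf{1}^{\perp}$, which is the one place where symmetry of $A$ (hence the spectral theorem) is invoked.
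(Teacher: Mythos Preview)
Your proof is correct and is exactly the standard spectral argument for the expander mixing lemma, with indicator vectors replaced by multiplicity vectors. The paper does not give its own proof of this lemma but merely cites it from \cite{hanson}; your argument is essentially the one that appears there.
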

\subsection{Sum-product graphs}
The sum-product graph $SP_{q, d}$ is defined as follows. The vertex set of the
sum-product graph $SP_{q, d}$ is the set $V (SP_{q, d})
=\mathbb{F}_q^d \times \mathbb{F}_q$. Two vertices $U = (\mathbf{a}, b)$ and
$V = (\mathbf{c}, d) \in V (SP_{q, d})$ are connected by an edge, $(U, V)
\in E (SP_{q, d})$, if and only if $\mathbf{a}\cdot \mathbf{c}=b+d$. Vinh \cite{vinhajm} proved the following lemma on the $(n,d,\gamma)$ form of $SP_{q, d}$.

\begin{lemma}[\textbf{Vinh}, \cite{vinhajm}]\label{sp-graph-lemma}
  For any $d \geq 1$, the sum-product graph $S
  P_{q, d}$ is an \[(q^{d + 1}, q^d, \sqrt{2 q^d})-\mbox{graph}.\]
\end{lemma}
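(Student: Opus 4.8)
The plan is to analyze the spectrum of the adjacency matrix $A$ of $SP_{q,d}$ directly, computing $A^2$ through a common-neighbour count and reading off the eigenvalues of $A$ from those of $A^2$. To avoid the clash between the dimension $d$ and the second coordinate appearing in the definition, I would write a typical vertex as $(\mathbf a,b)$ and a second as $(\mathbf c,e)$, so that the edge relation reads $\mathbf a\cdot\mathbf c=b+e$. The easy structural facts come first: the vertex set $\mathbb{F}_q^d\times\mathbb{F}_q$ has cardinality $q^{d}\cdot q=q^{d+1}$, so $n=q^{d+1}$; the relation is symmetric in $(\mathbf a,b)$ and $(\mathbf c,e)$, so $A$ is real symmetric with real eigenvalues $\gamma_1\ge\cdots\ge\gamma_n$. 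For regularity, fix $U=(\mathbf a,b)$: a neighbour $V=(\mathbf c,e)$ is obtained by choosing $\mathbf c\in\mathbb{F}_q^d$ freely and setting $e=\mathbf a\cdot\mathbf c-b$, so every vertex has exactly $q^d$ neighbours, the graph is $q^d$-regular, and $\gamma_1=q^d$ with eigenvector the all-ones vector $\mathbf 1$.

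The heart of the argument is to count common neighbours, i.e. the entries of $A^2$. For $U=(\mathbf a,b)$ and $U'=(\mathbf a',b')$, a common neighbour $V=(\mathbf c,e)$ must satisfy $\mathbf a\cdot\mathbf c=b+e$ and $\mathbf a'\cdot\mathbf c=b'+e$; eliminating $e$ leaves the single linear equation $(\mathbf a-\mathbf a')\cdot\mathbf c=b-b'$. I would split into cases: if $\mathbf a\ne\mathbf a'$ this equation has exactly $q^{d-1}$ solutions $\mathbf c$, each fixing $e$ uniquely, so $U$ and $U'$ have $q^{d-1}$ common neighbours; if $\mathbf a=\mathbf a'$ and $b\ne b'$ there are none; and if $U=U'$ there are $q^{d}$. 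This shows $A^2=q^{d-1}J+C$, where $J$ is the all-ones matrix and $C$ is block diagonal over the fibres $\{\mathbf a=\mathbf a'\}$, each $q\times q$ block being $q^{d-1}(qI_q-J_q)$.

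From here the spectrum follows by linear algebra. Each block $q^{d-1}(qI_q-J_q)$ has eigenvalue $0$ once (eigenvector $\mathbf 1_q$) and $q^{d}$ with multiplicity $q-1$, so the eigenvalues of $C$ lie in $\{0,q^d\}$ and $C\mathbf 1=0$. Since $\mathbf 1$ is a common eigenvector and $J$ vanishes on $\mathbf 1^{\perp}$, on $\mathbf 1^{\perp}$ we have $A^2=C$, whence every eigenvalue $\gamma_i$ with $i\ge 2$ satisfies $\gamma_i^2\in\{0,q^d\}$ and therefore $|\gamma_i|\le q^{d/2}$. Thus $\gamma(G)=\max\{\gamma_2,-\gamma_n\}\le q^{d/2}\le\sqrt{2q^d}$, which is the claimed bound (indeed slightly stronger). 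As a consistency check one can verify $\operatorname{tr}(A^2)=n\cdot q^d=q^{2d+1}$, matching $\gamma_1^2=q^{2d}$ together with the eigenvalue $q^d$ of multiplicity $q^d(q-1)$ on $\mathbf 1^{\perp}$.

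I expect the only delicate point to be the bookkeeping in the common-neighbour count, specifically the degenerate fibre $\mathbf a=\mathbf a'$ and the treatment of loops (vertices with $\mathbf a\cdot\mathbf a=2b$, which exist since $q$ is odd): one should use the $0/1$ adjacency matrix including these diagonal entries, so that row sums stay exactly $q^d$ and the identity $A^2_{UU}=\deg(U)=q^d$ persists regardless of loops. An alternative route, closer to Vinh's original treatment, would diagonalise $A$ via the additive characters $\psi$ of $\mathbb{F}_q$: the eigenfunctions are $\psi$-twisted exponentials, and $\gamma(G)$ reduces to a Gauss-sum estimate over $\mathbb{F}_q^d$ of modulus $q^{d/2}$, giving the same conclusion. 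The factor $\sqrt 2$ in the statement simply leaves room for such character-sum estimates and for the loop correction.
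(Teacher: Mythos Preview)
Your argument is correct. The paper does not actually prove this lemma: it is quoted from Vinh's work \cite{vinhajm} and used as a black box, so there is no in-paper proof to compare against. Your direct computation of $A^2$ via the common-neighbour count, followed by the block decomposition $A^2=q^{d-1}J+C$ with $C$ built from copies of $q^{d-1}(qI_q-J_q)$, is a clean self-contained route and in fact yields the sharper bound $\gamma(G)\le q^{d/2}$ rather than $\sqrt{2q^d}$. The care you take with loops (keeping $A_{UU}=1$ when $\mathbf a\cdot\mathbf a=2b$ so that row sums remain exactly $q^d$) is exactly the right bookkeeping, and your trace check is consistent. The character-sum alternative you sketch is closer in spirit to how Vinh originally treats such Cayley-type graphs; both approaches land on the same eigenvalue bound, with the combinatorial $A^2$ method being slightly more elementary here because the common-neighbour count is governed by a single linear equation.
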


\section{Proofs of Theorems \ref{hello1}, \ref{bon}, \ref{2017}, and \ref{bon1}}
For $\mathcal{E}\subseteq \mathbb{F}_q^d$ and $\lambda\in \mathbb{F}_q$, we define
\[\nu_{\mathcal{E}}(\lambda)=\left\vert \left\lbrace (\mathbf{x}, \mathbf{y})\in \mathcal{E}\times \mathcal{E}\colon ||\mathbf{x}-\mathbf{y}||=\lambda\right\rbrace\right\vert.\] 

In order to prove Theorems \ref{hello1}--\ref{bon1}, we need the following lemmas, where the first one follows from the proof of \cite[Theorem 3.5]{kod}.
\begin{lemma}[\textbf{Koh-Sun}, \cite{kod}]\label{pro2}
Let $\mathbb{F}_q$ be a finite field of order $q$ with $q\equiv 3\mod 4$.  Let $\E$ be a set in $\mathbb{F}_q^2$ with $|\E|\gg q$. Then we have
\[E_{+}^1(\E)=\sum_{\lambda\in \mathbb{F}_q}\nu_{\mathcal{E}}(\lambda)^2\le  \frac{|\mathcal{E}|^4}{q}+(1+\sqrt{3})q|\mathcal{E}|^{5/2}.\]
\end{lemma}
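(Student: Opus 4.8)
The plan is to reduce $E_{+}^1(\E)=\sum_{\lambda\in\mathbb{F}_q}\nu_\E(\lambda)^2$ to the spectral properties of the finite Euclidean distance graphs on $\mathbb{F}_q^2$, which is where the restriction $q\equiv 3\bmod 4$ enters. Since $\|\cdot\|$ is then anisotropic, $\|\mathbf{x}-\mathbf{y}\|=0$ forces $\mathbf{x}=\mathbf{y}$, so $\nu_\E(0)=|\E|$ and it remains to bound $\sum_{\lambda\neq 0}\nu_\E(\lambda)^2$. For $\lambda\neq0$ let $\mathcal{D}_\lambda$ be the graph on $\mathbb{F}_q^2$ with $\mathbf{x}\sim\mathbf{y}$ iff $\|\mathbf{x}-\mathbf{y}\|=\lambda$, so that $\nu_\E(\lambda)$ is the number of ordered edges of $\mathcal{D}_\lambda$ with both endpoints in $\E$. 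The needed structural fact is that each sphere $\{\mathbf{w}:\|\mathbf{w}\|=\lambda\}$ has exactly $q+1$ points and its indicator has Fourier transform of size $O(\sqrt q)$ off the origin (a Weil bound for Kloosterman/Salié sums, again using $q\equiv3\bmod4$ to rule out isotropic frequencies), so that $\mathcal{D}_\lambda$ is a $(q^2,\,q+1,\,\gamma)$-graph with $\gamma\ll\sqrt q$; carrying the sharp constants through this step is what eventually produces the factor $1+\sqrt3$.

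Applying Lemma~\ref{expander} with $B=C=\E$ then gives, for every $\lambda\neq0$,
\[
\Big|\,\nu_\E(\lambda)-\tfrac{(q+1)|\E|^2}{q^2}\,\Big|\ \ll\ \sqrt q\,|\E|.
\]
The main obstacle is that one cannot simply square this for each $\lambda$ and add up: that only bounds $\sum_{\lambda\neq0}\nu_\E(\lambda)^2$ with an error of order $q^2|\E|^2$, and the slightly better ``$\max\times$ sum'' estimate $\sum_{\lambda\neq0}\nu_\E(\lambda)^2\le(\max_\lambda\nu_\E(\lambda))\sum_{\lambda\neq0}\nu_\E(\lambda)$ only gives an error of order $\sqrt q\,|\E|^3$; both exceed $q|\E|^{5/2}$ when $|\E|\gg q$. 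What is really needed is a second-moment estimate: writing $\nu_\E(\lambda)=\tfrac{(q+1)|\E|^2}{q^2}+\theta_\lambda$, the exact identities $\sum_{\lambda\neq0}\nu_\E(\lambda)=|\E|^2-|\E|$ and $\sum_{\lambda\neq0}\theta_\lambda=\tfrac{|\E|^2}{q^2}-|\E|$ show that $\sum_{\lambda\neq0}\nu_\E(\lambda)^2$ equals $\tfrac{|\E|^4}{q}$ plus lower-order terms plus $\sum_{\lambda\neq0}\theta_\lambda^2$, so the whole statement is equivalent to $\sum_{\lambda\neq0}\theta_\lambda^2\ll q|\E|^{5/2}$ (the hypothesis $|\E|\gg q$ being used to dominate the secondary terms $\tfrac{|\E|^4}{q^2}$ and $|\E|^2$ by this quantity).

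To obtain $\sum_{\lambda\neq0}\theta_\lambda^2\ll q|\E|^{5/2}$ I would not work with one $\mathcal{D}_\lambda$ at a time but rather interpret $\sum_{\lambda}\nu_\E(\lambda)^2$ globally as the number of quadruples $(\mathbf{x}_1,\mathbf{y}_1,\mathbf{x}_2,\mathbf{y}_2)\in\E^4$ with $\|\mathbf{x}_1-\mathbf{y}_1\|=\|\mathbf{x}_2-\mathbf{y}_2\|$: expanding the norms, this condition is bilinear, so the count is an incidence number between the $|\E|^2$ points $\E\times\E\subseteq\mathbb{F}_q^4$ and the $|\E|^2$ quadrics obtained as $(\mathbf{x}_1,\mathbf{x}_2)$ ranges over $\E^2$, each of which is a sphere for the split quadratic form $\|\mathbf{y}_1\|-\|\mathbf{y}_2\|$ on $\mathbb{F}_q^4$. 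The spectral (or Fourier) estimate for point--sphere incidences for this split form supplies an error term of exactly the right order, and unwinding it --- keeping track of the sphere cardinalities and the Weil constant --- yields the bound in the statement. This is precisely the mechanism behind the cited Theorem~3.5 of Koh and Sun, and carrying out that bookkeeping is the one genuinely delicate step of the proof.
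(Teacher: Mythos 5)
This lemma is not proved in the paper at all: it is quoted verbatim from Koh--Sun, with the remark that it ``follows from the proof of \cite[Theorem 3.5]{kod}''. So the only meaningful comparison is with the Fourier-analytic argument of that source, which your sketch gestures at but does not actually reproduce.

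The parts of your reduction that you carry out are correct: for $q\equiv 3\bmod 4$ the form $x_1^2+x_2^2$ is anisotropic, so $\nu_{\E}(0)=|\E|$ and each circle of nonzero radius has exactly $q+1$ points; the bookkeeping showing that the lemma is equivalent to $\sum_{\lambda\neq 0}\theta_\lambda^2\ll q|\E|^{5/2}$ is sound; and your diagnosis that squaring the per-$\lambda$ expander bound, or using a max-times-sum argument, is too lossy in the range $q\ll|\E|\ll q^2$ is exactly right. The genuine gap is that the one estimate which \emph{is} the lemma is then asserted rather than proved. Recasting $\sum_\lambda\nu_{\E}(\lambda)^2$ as point--sphere incidences for the split form $||\mathbf{y}_1||-||\mathbf{y}_2||$ on $\mathbb{F}_q^4$ does not ``supply an error term of exactly the right order'' off the shelf: the relevant sphere has radius zero, i.e.\ it is the isotropic cone of a hyperbolic form, it contains totally isotropic planes, and its Fourier transform fails to have square-root cancellation at isotropic frequencies --- and such frequencies exist in $\mathbb{F}_q^4$ for \emph{every} $q$, so the hypothesis $q\equiv 3\bmod 4$ cannot ``rule them out'' there (it only does so back in $\mathbb{F}_q^2$). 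Consequently the generic point--sphere incidence bound does not apply, and the actual content of Koh--Sun's Theorem 3.5 is the extra input that controls the contribution of those frequencies: a restriction-type inequality for circles, roughly of the shape $\max_{t\neq 0}\sum_{||\mathbf{m}||=t}|\widehat{E}(\mathbf{m})|^2\ll |\E|^{3/2}q^{-3}$, proved from the explicit Gauss/Sali\'e-sum formula for the Fourier transform of a circle. Its exponent $3/2$ is precisely where $|\E|^{5/2}=|\E|\cdot|\E|^{3/2}$ and the constant $1+\sqrt{3}$ come from, and nothing in your sketch produces it. As written, the argument reformulates the lemma, correctly locates the difficulty, and then cites the conclusion; to close the gap you must state and prove that restriction estimate (or an equivalent treatment of the isotropic frequencies).
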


For higher dimensional cases, the authors of \cite{kod} also proved a similar result for both cases $q\equiv 3\mod 4$ and $q\equiv 1\mod 4$, which can be found in \cite[Propositions 2.3, 2.6]{kod}

\begin{lemma}[\textbf{Koh-Sun}, \cite{kod}]\label{pro21}
Let $\E$ be a set in $\mathbb{F}_q^d$ with $d\ge 3$. Then we have
\[E_{+}^1(\E)=\sum_{\lambda\in \mathbb{F}_q}\nu_{\mathcal{E}}(\lambda)^2\le  \frac{|\mathcal{E}|^4}{q}+q^d|\mathcal{E}|^{2}.\]
\end{lemma}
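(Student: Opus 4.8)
The plan is to run the standard additive-character (Fourier) argument over $\mathbb{F}_q^d$, turning $E_+^1(\E)$ into a sum of squares of Gauss-twisted exponential sums, then to diagonalize that sum by orthogonality; the whole estimate will reduce to a single elementary inequality. Fix a nontrivial additive character $\chi$ of $\mathbb{F}_q$ and set
\[ S(s):=\sum_{\mathbf{x},\mathbf{y}\in\E}\chi\big(s\,||\mathbf{x}-\mathbf{y}||\big),\qquad s\in\mathbb{F}_q, \]
so that $\nu_\E(\lambda)=\tfrac1q\sum_{s}\chi(-s\lambda)S(s)$ by orthogonality. Plancherel on the group $\mathbb{F}_q$ (the variable being $\lambda$) then gives the exact identity
\[ E_+^1(\E)=\sum_{\lambda}\nu_\E(\lambda)^2=\frac{1}{q}\sum_{s\in\mathbb{F}_q}|S(s)|^2=\frac{|\E|^4}{q}+\frac{1}{q}\sum_{s\ne0}|S(s)|^2, \]
using $S(0)=|\E|^2$. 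Hence it suffices to show $\sum_{s\ne0}|S(s)|^2\le q^{d+1}|\E|^2$.

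Next I would evaluate $S(s)$ for $s\ne0$ by completing the square in each coordinate. Writing $\widehat{E}(\mathbf{m}):=\sum_{\mathbf{x}\in\E}\chi(-\mathbf{m}\cdot\mathbf{x})$ and using the quadratic Gauss sum $g_s:=\sum_{w\in\mathbb{F}_q}\chi(sw^2)$, which satisfies $|g_s|=\sqrt q$ for every $s\ne0$ (this is where $q$ odd enters), one obtains
\[ S(s)=\frac{g_s^{\,d}}{q^{d}}\sum_{\mathbf{m}\in\mathbb{F}_q^d}\chi\!\left(-\frac{||\mathbf{m}||}{4s}\right)|\widehat{E}(\mathbf{m})|^2. \]
Therefore $|S(s)|^2=q^{-d}\big|\sum_{\mathbf{m}}\chi(t_s||\mathbf{m}||)\,|\widehat{E}(\mathbf{m})|^2\big|^2$ with $t_s:=-1/(4s)$, and as $s$ runs over $\mathbb{F}_q^*$ the scalar $t_s$ runs bijectively over $\mathbb{F}_q^*$. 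Summing over $s\ne0$, expanding the square, and applying orthogonality in the $t$-variable collapses everything to the diagonal $||\mathbf{m}||=||\mathbf{m}'||$: writing $W(\lambda):=\sum_{||\mathbf{m}||=\lambda}|\widehat{E}(\mathbf{m})|^2$ one gets
\[ \sum_{s\ne0}|S(s)|^2=\frac{1}{q^{d}}\left(q\sum_{\lambda}W(\lambda)^2-\Big(\sum_{\mathbf{m}}|\widehat{E}(\mathbf{m})|^2\Big)^{2}\right). \]

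To finish I would invoke Parseval, $\sum_{\mathbf{m}}|\widehat{E}(\mathbf{m})|^2=q^d|\E|$, giving $\sum_\lambda W(\lambda)=q^d|\E|$, and then use the trivial inequality $\sum_\lambda W(\lambda)^2\le\big(\sum_\lambda W(\lambda)\big)^2$, valid for any nonnegative weights. This yields $\sum_\lambda W(\lambda)^2\le q^{2d}|\E|^2$, whence, combining the displays,
\[ E_+^1(\E)=\frac{|\E|^4}{q}-q^{d-1}|\E|^2+\frac{1}{q^{d}}\sum_\lambda W(\lambda)^2\le\frac{|\E|^4}{q}+q^{d}|\E|^2, \]
the negative term $-q^{d-1}|\E|^2$ being discarded. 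The argument uses only $|g_s|=\sqrt q$, which holds for all odd $q$ independently of $q\bmod 4$, consistent with the absence of a congruence hypothesis here, in contrast with the finer two-dimensional Lemma~\ref{pro2}.

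The main obstacle is the Gauss-sum bookkeeping in the middle step: one must complete the square correctly in all $d$ coordinates, track that the resulting phase is exactly a nonzero scalar multiple of $||\mathbf{m}||$, and verify that $s\mapsto-1/(4s)$ is a bijection of $\mathbb{F}_q^*$ so that orthogonality in $t$ produces the diagonal identity with the right constants. Conceptually, the pleasant point is that for $d\ge3$ no nontrivial Fourier decay of the sphere $\{||\mathbf{m}||=\lambda\}$ is required: the crude bound $\sum W^2\le(\sum W)^2$ already lands inside the admissible error $q^d|\E|^2$. It is precisely this slack that disappears when $d=2$, where one must instead exploit the quadratic-residue structure (hence the condition $q\equiv3\bmod4$) to control $W(\lambda)$ on individual spheres, as in Lemma~\ref{pro2}.
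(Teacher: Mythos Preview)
Your argument is correct. The paper itself does not prove this lemma; it is quoted from Koh--Sun \cite[Propositions~2.3 and~2.6]{kod}, so there is no in-paper proof to compare against. Your Fourier/Gauss-sum derivation is the standard route to this estimate and is essentially how the cited source obtains it: express $E_+^1(\E)$ via Plancherel, evaluate $S(s)$ for $s\neq0$ by completing the square (picking up $g_s^d$), collapse $\sum_{s\neq0}|S(s)|^2$ by orthogonality in $t_s=-1/(4s)$, and close with $\sum_\lambda W(\lambda)^2\le(\sum_\lambda W(\lambda))^2$ and Parseval. One small remark: nothing in your computation actually uses $d\ge 3$; the inequality $E_+^1(\E)\le |\E|^4/q+q^d|\E|^2$ holds for every $d\ge 1$ with $q$ odd. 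The hypothesis $d\ge 3$ in the lemma is there only because for $d=2$ the paper needs the sharper bound of Lemma~\ref{pro2}, not because your argument would fail.
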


We will use the following lemma to prove Theorem \ref{hello1} and Theorem \ref{2017}.
\begin{lemma}\label{hello}
Let $k\ge 2$ be an integer, and $\E$ be a set in $\mathbb{F}_q^d$. We have 
\[\left\vert E_{+}^k(\E)-\frac{|\E|^{4k}}{q} \right\vert\ll q^d|\E|^2 E_{+}^{k-1}(\E) .\]
\end{lemma}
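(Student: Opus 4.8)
The plan is to expand $E_{+}^k(\E)$ using the function $\nu_{\E}$ and the convolution structure of the energy. Observe that, by definition,
\[
E_{+}^k(\E) = \sum_{t\in\mathbb{F}_q}\Big(\sum_{\lambda_1+\cdots+\lambda_k=t}\nu_{\E}(\lambda_1)\cdots\nu_{\E}(\lambda_k)\Big)^2 = \sum_{t\in\mathbb{F}_q} r_k(t)^2,
\]
where $r_k(t)$ counts representations of $t$ as an ordered sum of $k$ distances (weighted by $\nu_{\E}$); note $r_1=\nu_{\E}$ and $\sum_t r_k(t)=|\E|^{2k}$. The idea is to pass from level $k$ to level $k-1$ by peeling off one coordinate: $E_{+}^k(\E)$ counts $2k$-tuples of pairs in $\E$ whose first $k$ distances sum to the same value as the last $k$. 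Split the sum according to the value $\lambda=||\mathbf{x}_k-\mathbf{y}_k||$ of the $k$-th distance on the left and $\mu=||\mathbf{x}_{2k}-\mathbf{y}_{2k}||$ on the right; this writes $E_{+}^k(\E)$ as a weighted count that naturally involves $\nu_{\E}$ twice and an $E_{+}^{k-1}$-type inner structure.

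The key step is to use the spectral/expander-mixing input already established, namely the Koh--Sun bound (Lemmas \ref{pro2}, \ref{pro21}) controlling $E_{+}^1(\E)=\sum_\lambda \nu_{\E}(\lambda)^2$, in the form $\sum_\lambda \nu_{\E}(\lambda)^2 = \frac{|\E|^4}{q} + (\text{error})$ with error $\ll q^d|\E|^2$. More precisely I would show, by a Cauchy--Schwarz step followed by isolating the ``diagonal'' main term $|\E|^{4k}/q$, that
\[
\Big| E_{+}^k(\E) - \frac{|\E|^{4k}}{q}\Big| \ \ll\ \Big(\sum_\lambda \nu_{\E}(\lambda)^2 - \frac{|\E|^4}{q}\Big)\cdot E_{+}^{k-1}(\E) \ +\ (\text{lower order}),
\]
and then absorb $\sum_\lambda \nu_{\E}(\lambda)^2 - \frac{|\E|^4}{q} \ll q^d|\E|^2$. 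Concretely: writing $\nu_{\E}(\lambda) = \frac{|\E|^2}{q} + a_\lambda$ with $\sum_\lambda a_\lambda = 0$ and $\sum_\lambda a_\lambda^2 \ll q^d|\E|^2$, substitute this decomposition into the recursive expansion of $r_k$; the fully ``main'' contribution gives $|\E|^{4k}/q$, the terms that are linear in the $a_\lambda$'s cancel because $\sum a_\lambda=0$, and every surviving term carries at least one factor $\sum_\lambda a_\lambda^2$, leaving a factor that is bounded by $E_{+}^{k-1}(\E)$ after Cauchy--Schwarz in the remaining $k-1$ coordinates (together with $\sum_t r_{k-1}(t) = |\E|^{2(k-1)}$ and the elementary monotonicity $E_{+}^{j}(\E)\le |\E|^{2j}\cdot E_{+}^{j-1}(\E)$, or a direct convolution bound, to handle mixed terms).

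I expect the main obstacle to be the bookkeeping in the inductive peeling: one must check carefully that after substituting $\nu_{\E}=\frac{|\E|^2}{q}+a_\lambda$ into the $k$-fold convolution and squaring, all terms other than $|\E|^{4k}/q$ genuinely contain a factor of $\sum_\lambda a_\lambda^2$ (not merely $\sum_\lambda|a_\lambda|$, which would be too weak), and that the residual combinatorial factor is correctly dominated by $E_{+}^{k-1}(\E)$ rather than some larger quantity. The cleanest route is probably to avoid fully expanding: instead, view $E_{+}^k(\E)=\sum_{\lambda,\mu}\nu_{\E}(\lambda)\nu_{\E}(\mu)\,N(\lambda,\mu)$ where $N(\lambda,\mu)$ is the number of $(2k-2)$-tuples of pairs whose $(k-1)$-fold left sum minus right sum equals $\mu-\lambda$, bound $N(\lambda,\mu)\le N(0,0)=E_{+}^{k-1}(\E)$ trivially is false in general, so instead apply Cauchy--Schwarz in $(\lambda,\mu)$ to get $E_{+}^k(\E)\le \big(\sum_\lambda\nu_{\E}(\lambda)^2\big)\cdot\big(\sum_{\lambda,\mu}N(\lambda,\mu)^2\big)^{1/2}\cdot(\ldots)$ — and then the second factor is itself an energy at level $k-1$; matching this against the main term $|\E|^{4k}/q$ via the identity $\sum_{\lambda,\mu}N(\lambda,\mu)=|\E|^{2(2k-2)}$ yields exactly the claimed error term $q^d|\E|^2E_{+}^{k-1}(\E)$ once $\sum_\lambda\nu_{\E}(\lambda)^2$ is replaced by its main term plus the $O(q^d|\E|^2)$ error. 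Getting the two-sided estimate (the lower bound on $E_{+}^k$ matching $|\E|^{4k}/q$) will require the same decomposition read in reverse, using positivity of $r_k$ and the vanishing of the first-order terms.
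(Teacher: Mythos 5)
There is a genuine gap, and it sits exactly where you flagged your own uncertainty. Your plan reduces the lemma to the intermediate claim that $\bigl|E_{+}^k(\E)-\tfrac{|\E|^{4k}}{q}\bigr|\ll\bigl(\sum_\lambda\nu_{\E}(\lambda)^2-\tfrac{|\E|^4}{q}\bigr)\,E_{+}^{k-1}(\E)$, with the first factor controlled by the Koh--Sun bounds. Auditing this in Fourier language: with $a_\lambda=\nu_{\E}(\lambda)-\tfrac{|\E|^2}{q}$ one has $E_{+}^k(\E)-\tfrac{|\E|^{4k}}{q}=\tfrac1q\sum_{\xi\ne0}|\widehat{\nu_{\E}}(\xi)|^{2k}$ (your observation that the linear terms cancel is correct), while the quantity you control, $\sum_\lambda a_\lambda^2$, equals $\tfrac1q\sum_{\xi\ne0}|\widehat{\nu_{\E}}(\xi)|^{2}$. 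The peeling step therefore needs $\sum_{\xi\ne0}|\widehat{\nu_{\E}}(\xi)|^{2k}\le\bigl(\max_{\xi\ne0}|\widehat{\nu_{\E}}(\xi)|^{2}\bigr)\sum_{\xi\ne0}|\widehat{\nu_{\E}}(\xi)|^{2(k-1)}$, i.e.\ a \emph{pointwise} ($L^\infty$) bound on the nontrivial Fourier coefficients of $\nu_{\E}$. The $L^2$ bound you import is weaker by a factor of up to $q$ when the spectrum is concentrated, and the Cauchy--Schwarz you invoke goes the wrong way: for nonnegative $b_\xi$, Chebyshev's sum inequality gives $\tfrac1n\sum b_\xi^{k}\ge(\tfrac1n\sum b_\xi)(\tfrac1n\sum b_\xi^{k-1})$, so your displayed inequality is not a Cauchy--Schwarz consequence and fails for concentrated $\widehat{\nu_{\E}}$. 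Your fallback route has the same defect: $\sum_{\lambda,\mu}N(\lambda,\mu)^2$ is $q$ times the fourth moment of the $(k-1)$-fold representation function, i.e.\ an energy of order $2(k-1)$, not $E_{+}^{k-1}(\E)$. A secondary problem: Lemma \ref{hello} is unconditional for any $\E\subseteq\mathbb{F}_q^d$, whereas Lemma \ref{pro2} needs $d=2$, $q\equiv3\bmod4$ and $|\E|\gg q$, so building the proof on it would import hypotheses the lemma does not carry.

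The missing ingredient is precisely what the paper supplies: the energy identity is encoded as an edge count between two multisets $\B,\C$ in the sum-product graph $SP_{q,2d}$, whose multiplicity second moments are each at most $|\E|^2E_{+}^{k-1}(\E)$, and the expander mixing lemma (Lemma \ref{expander}) with $\gamma=\sqrt{2q^{2d}}$ from Lemma \ref{sp-graph-lemma} gives both the main term $|\E|^{4k}/q$ and the error $\ll q^d|\E|^2E_{+}^{k-1}(\E)$ in one stroke. The spectral gap $\gamma\le\sqrt{2}\,q^{d}$ is exactly the sup-norm bound $\max_{\xi\ne0}|\widehat{\nu_{\E}}(\xi)|\ll q^{d/2}|\E|$ in graph-theoretic clothing. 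If you replace your $L^2$ input by the character-sum estimate $\bigl|\sum_{\mathbf{x},\mathbf{y}\in\E}\chi(s\,||\mathbf{x}-\mathbf{y}||)\bigr|\le q^{d/2}|\E|$ for $s\ne0$ (one Cauchy--Schwarz plus completing the sum in $\mathbf{x}$), then your decomposition $\nu_{\E}=\tfrac{|\E|^2}{q}+a$ does close and yields a correct, purely Fourier-analytic proof of the lemma; as written, however, the argument does not go through.
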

\begin{proof}
We first define two multi-sets of vertices in the sum-product graph $SP_{q, 2d}$ as follows:
\[\B:=\left\lbrace \left(-2\mathbf{x}_1, -2\mathbf{x}_2, -||\mathbf{x}_1||-||\mathbf{x}_2||-||\mathbf{x}_3-\mathbf{y}_3||-\cdots-||\mathbf{x}_k-\mathbf{y}_k||+||\mathbf{x}_{k+1}-\mathbf{y}_{k+1}||\right)\colon \mathbf{x}_i, \mathbf{y}_i\in \E\right\rbrace,\]
\[\C:=\left\lbrace \left(\mathbf{y}_1, \mathbf{y}_2, -||\mathbf{y}_1||-||\mathbf{y}_2||+||\mathbf{x}_{k+2}-\mathbf{y}_{k+2}||+\cdots+||\mathbf{x}_{2k}-\mathbf{y}_{2k}||\right)\colon \mathbf{x}_i, \mathbf{y}_i\in \E \right\rbrace.\]
For $(\mathbf{x}_i, \mathbf{y}_i)_{i=1}^{2k}\in (\E\times \E)^{2k}$, if we have 
\[
||\mathbf{x}_1- \mathbf{y}_1||+\cdots+||\mathbf{x}_k-\mathbf{y}_k||=||\mathbf{x}_{k+1}-\mathbf{y}_{k+1}||+\cdots+||\mathbf{x}_{2k}-\mathbf{y}_{2k}||,\]
then there is an edge between 
\[\left(-2\mathbf{x}_1, -2\mathbf{x}_2, -||\mathbf{x}_1||-||\mathbf{x}_2||-||\mathbf{x}_3-\mathbf{y}_3||-\cdots-||\mathbf{x}_k-\mathbf{y}_k||+||\mathbf{x}_{k+1}-\mathbf{y}_{k+1}||\right)\in \B\]
and 
\[\left(\mathbf{y}_1, \mathbf{y}_2, -||\mathbf{y}_1||-||\mathbf{y}_2||+||\mathbf{x}_{k+2}-\mathbf{y}_{k+2}||+\cdots+||\mathbf{x}_{2k}-\mathbf{y}_{2k}||\right) \in \C\]
in the sum-product graph $SP_{q, 2d}$. Therefore $E_{+}^k(\E)$ is equal to the number of edges between $\B$ and $\C$ in $SP_{q, 2d}$.  In order to apply Lemma \ref{expander}, we need to estimate upper bounds of  $\sum_{b\in \B}m_{\B}(b)^2$ and $\sum_{c\in \C}m_{\C}(c)^2$. One can check that 
\[\sum_{b\in \B}m_{\B}(b)^2\le |\E|^2 E_{+}^{k-1}(\E), ~\sum_{c\in \C}m_{\C}(c)^2\le |\E|^2 E_{+}^{k-1}(\E), ~\mbox{and}~|\B|=|\C|=|\E|^{2k}.\]
It follows from Lemmas \ref{expander} and \ref{sp-graph-lemma} that the number of edges between $\B$ and $\C$ in the sum-product graph $SP_{q, 2d}$ satisfies 
\[\left\vert E_{+}^k(\E)-\frac{|\E|^{4k}}{q}\right\vert  \ll q^d|\E|^2 E_{+}^{k-1}(\E),\]
which concludes the proof of the lemma.\end{proof}

\paragraph{Proof of Theorem \ref{hello1}:}
The proof proceeds by induction on $k$. The base case $k=2$ follows from Lemma \ref{pro2} and Lemma \ref{hello} with $d=2$. Suppose that the claim holds for $k-1\ge 2$, we show that it also holds for $k$. Indeed, it follows from Lemma \ref{hello} with $d=2$ that 
\begin{equation}\label{1011} \left\vert E_{+}^k(\E)-\frac{|\E|^{4k}}{q}\right\vert \ll q^2|\E|^2 E_{+}^{k-1}(\E) .\end{equation}

By induction hypothesis, we have 
\begin{equation}\label{10111} E_{+}^{k-1}(\E)\ll \frac{|\E|^{4(k-1)}}{q}+q^{2(k-1)-1}|\E|^{2(k-1)+\frac{1}{2}}.\end{equation}

Putting (\ref{1011}) and (\ref{10111}) together gives us 
\[\left\vert E_{+}^k(\E)- \frac{|\E|^{4k}}{q}\right\vert \ll q^{2k-1}|\E|^{2k+\frac{1}{2}},\]
which ends the proof of the theorem. $\square$
\paragraph{Proof of Theorem \ref{bon}:}
For each $\lambda\in \mathbb{F}_q$, let $N_\lambda$ be the number of tuples $(\mathbf{x}_1, \mathbf{y}_1,\ldots, \mathbf{x}_k, \mathbf{y}_k)$ in $\E^{2k}$ satisfying $||\mathbf{x}_1-\mathbf{y}_1||+||\mathbf{x}_2-\mathbf{y}_2||+\cdots+||\mathbf{x}_k-\mathbf{y}_k||=\lambda$. We have $\sum_{\lambda\in \mathbb{F}_q}N_{\lambda}=|\E|^{2k}$. It is easy to check that $\sum_{\lambda\in \mathbb{F}_q}N_{\lambda}^2=E_+^k(\E)$. By applying the Cauchy-Schwarz inequality, we obtain the following 
\[\sum_{\lambda\in \mathbb{F}_q}N_{\lambda}\le \sqrt{|k\Delta_{\mathbb{F}_q}(\E)|}\left(E_+^k(\E)\right)^{1/2}.\]
This implies that 
\[|k\Delta_{\mathbb{F}_q}(\E)|\ge \frac{|\E|^{4k}}{E_+^k(\E)}.\]
Thus the theorem follows immediately from Theorem \ref{hello1}. $\square$
\paragraph{Proof of Theorem \ref{2017}:}
The proof of Theorem \ref{2017} is as similar as that of Theorem \ref{hello1} except that we use  Lemma \ref{pro21} instead of Lemma \ref{pro2}. $\square$
\paragraph{Proof of Theorem \ref{bon1}:}
The proof of Theorem \ref{bon1} is as similar as that of Theorem \ref{bon} except that we use Theorem \ref{2017} instead of Theorem \ref{hello1}. $\square$\\
{\bf Remarks:}
We conclude this paper with some discussions on $E_{+}^2(\E, \F)$ for $\E, \F\subseteq \mathbb{F}_q^d$ satisfying $|\E|<|\F|$.  The main steps in our approach are Lemma \ref{hello} and upper bounds of $E_{+}^1(\E, \F)$. For two sets $\E$ and $\F$ in $\mathbb{F}_q^2$ with $q\equiv 3\mod 4$, it has been shown in \cite{kod} that 
\begin{equation}\label{cuoingay1}E_{+}^1(\E, \F)\ll \frac{|\E|^2|\F|^2}{q}+q|\E|^{3/2}|\F| ~\mbox{for $d=2$},\end{equation}
and 
\begin{equation}\label{cuoingay2}E_{+}^1(\E, \F)\ll \frac{|\E|^2|\F|^2}{q}+q^{\frac{d-1}{2}}|\E|^{2}|\F| ~\mbox{for odd $d\ge 3$}.\end{equation}
For $\E, \F\subseteq \mathbb{F}_q^d$, one can follow the proof of Lemma \ref{hello} to obtain the following 
\begin{equation}\label{cuoingay3}\left\vert E_{+}^k(\E, \F)-\frac{|\E|^{2k}|\F|^{2k}}{q} \right\vert\ll q^d|\E||\F| E_{+}^{k-1}(\E, \F).\end{equation}
If we put (\ref{cuoingay1}), (\ref{cuoingay2}), and (\ref{cuoingay3}) together, then we have
\[\left\vert E_{+}^2(\E, \F)-\frac{|\E|^4|\F|^4}{q}\right\vert\le q|\E|^3|\F|^3+q^3|\E|^{\frac{5}{2}}|\F|^2 ~\mbox{for $d=2$},\]
\begin{equation}\left\vert E_{+}^2(\E, \F)-\frac{|\E|^4|\F|^4}{q}\right\vert\le q^{d-1}|\E|^3|\F|^3+q^{\frac{3d-1}{2}}|\E|^3|\F|^2 ~\mbox{for odd}~d\ge 3 \nonumber.\end{equation}
These results are also improvements of Theorem \ref{mot}.
\section{Acknowledgments}
The author would like to thank three reviewers for valuable comments and suggestions which improved the presentation of this paper considerably. The author would like to thank Prof. Doowon Koh for pointing out the necessary conditions of Lemma \ref{pro2}, Theorem \ref{hello1}, and Theorem \ref{bon}.
The author was partially supported by Swiss National Science Foundation grants 200020-162884 and 200021-175977.

\end{document}